\documentclass[11pt,bezier]{article}
\pdfoutput=1
\usepackage{amsmath}
\usepackage{amsfonts,amsthm,amssymb}
\usepackage{exscale}
\usepackage{relsize}
\usepackage{graphicx}
\usepackage{epstopdf}
\usepackage{caption}
\usepackage{float}

\usepackage{amssymb}
\usepackage{amsmath}
\newtheorem{lemma}{Lemma}[section]
\newtheorem{theorem}{Theorem}[section]
\newtheorem{corollary}{Corollary}[section]

\usepackage{graphicx}
\makeatletter
\newcommand{\Rmnum}[1]{\expandafter\@slowromancap\romannumeral #1@}
\makeatother
\theoremstyle{definition}

\usepackage[justification=centering]{caption}
\allowdisplaybreaks[3]
\begin{document}
\title{The 3-restricted Edge-Connectivity of Strong Product Graphs\footnote{The research is supported by Natural Science Foundation of Xinjiang Uygur Autonomous Region (2025D01E02) and National Natural Science Foundation of China (12261086).}}
\author{Wenxin Wang$^{a}$, Yingzhi Tian$^{a}$\footnote{Corresponding author. E-mail: 107552300678@stu.xju.edu.cn (W. Wang); tianyzhxj@163.com (Y. Tian); Piaoxiangxing@163.com (J. Wang).}, Jing Wang$^{b}$\\
{\small $^{a}$College of Mathematics and System Sciences, Xinjiang
University, Urumqi, Xinjiang 830046, PR China}\\
{\small $^{b}$Xinjiang Railway Vocational and Technical College, Urumqi, Xingjiang 830011, PR China}}

\date{}

\maketitle

\begin{sloppypar}

\noindent{\bf Abstract }An edge subset \( S \subseteq E(G) \) is called a 3-restricted edge-cut if $G-S$ is disconnected and each component of \( G - S \) contains at least three vertices. The 3-restricted edge-connectivity of a graph \( G \), denoted by \( \lambda_3(G) \), is defined as the minimum cardinality among all 3-restricted edge-cuts if there are at least one; otherwise, \( \lambda_3(G) = +\infty \).
It is proved that $\lambda_3(G)\leq\xi_3(G)$ if $G$ has a 3-restricted edge-cut, where $\xi_3(G) = \min \{ |[X, V(G) \setminus X]_G||X \subseteq V(G),|X| = 3 \text{ and } G[X] \text{ is connected}\}.$
If \( \lambda_3(G) = \xi_3(G) \), then \( G \) is said to be maximally 3-restricted edge-connected. The strong product of graphs \( G \) and \( H \), denoted by \( G \boxtimes H \), is the graph with the vertex set $ V(G)\times V(H) $ and the edge set $ \{(x_{1},y_{1})(x_{2},y_{2})|x_{1}=x_{2}\text{ and }y_{1}y_{2}\in E(H);\text{ or }y_{1}=y_{2} $ and $ x_{1}x_{2}\in E(G) $; or $ x_{1}x_{2}\in E(G) $ and $ y_{1}y_{2}\in E(H)\}$.
In this paper, we prove that \( G \boxtimes C_{n} \) is maximally 3-restricted edge-connected, and determine the 3-restricted edge-connectivity of \( G \boxtimes K_{n} \), where \( G \) is a maximally edge-connected graph, \( C_{n} \) and \( K_{n} \) are the cycle and the complete graph of order \( n \), respectively.

\noindent{\bf Keywords:} Edge-connectivity; 3-restricted edge-connectivity; Maximally edge-connected graphs; Maximally 3-restricted edge-connected graphs; Strong product graphs

\section{Introduction}
The topology structure of an interconnection network is usually modeled by a graph \( G = (V(G), E(G)) \), where \( V(G) \) represents the set of nodes in the network, and \( E(G) \) represents the set of links between these nodes.  Reliability and fault-tolerance are two crucial factors in designing a network. The edge-connectivity of the graph $G$ is used to measure the reliability and fault-tolerance of the corresponding network. For the graph $G = (V(G), E(G))$, its order and size are denoted by \( n(G) = |V(G)| \) and \( e(G) = |E(G)|\), respectively. For a vertex $u \in V(G)$, its $neighborhood$ $N_G(u)$ is $\{ v \in V(G) \mid v \text{ is adjacent to } u \}$, and its $degree$ $d_G(u)$ is the number of edges incident with $u$. The $minimum$ $degree$ $\delta(G)$ and the $maximum$ $degree$ $\Delta(G)$ of $G$ are $\min_{u\in V(G)}d_{G}(u)$ and $\max_{u\in V(G)}d_{G}(u)$, respectively. The $induced$ $subgraph$ $G[U]$ of a vertex subset $U \subseteq V(G)$, is the graph with the vertex set $U$, where two vertices in $U$ are adjacent in $G[U]$ if and only if they are adjacent in $G$. For graph-theoretical terminologies and notation not defined here, we follow \cite{Bondy}.

A graph \( G \) is $connected$ if there exists a path between every pair of distinct vertices; otherwise, $G$ is $disconnected$. Each maximal connected subgraph of $G$ is called its $component$. An edge subset \( S \subseteq E(G) \) is called an $edge$-$cut$ if \( G - S \) is disconnected. The $edge$-$connectivity$ \(\lambda(G)\) of $G$ is defined as the minimum cardinality among all edge-cuts. It is well-known that \(\lambda(G) \leq \delta(G)\). So the graph \( G \) is called $maximally$ $edge$-$connected$ if \( \lambda(G) = \delta(G) \); and \( G \) is called $super$ $edge$-$connected$ if every minimum edge-cut isolates one vertex.

One deficiency of the edge-connectivity is that it only considers when the remaining graph is not connected, but not the properties of the components. To address this limitation, Esfahanian and Hakimi \cite{Esfahanian} proposed the concept of restricted edge-connectivity, which is a type of conditional edge-connectivity initially introduced by Harary \cite{Harary}. An edge set \( S \subseteq E(G) \) is termed a $restricted$ $edge$-$cut$ if \( G - S \) is disconnected and each component of \( G - S \) contains at least two vertices. The $restricted$ $edge$-$connectivity$ of a graph \( G \), denoted by \( \lambda_2(G) \), is defined as the minimum cardinality among all restricted edge-cuts if there are at least one; otherwise, \( \lambda_2(G) = +\infty \). For an edge $e = uv \in E(G)$, its $edge$-$degree$ $\xi_G(e)$ is $d_G(u) + d_G(v) - 2$, and the $minimum$ $edge$-$degree$ of $G$ is $\xi(G) = \min \{ \xi_G(e) \mid e \in E(G) \}$. In graph $G$, two edges are defined as $non$-$adjacent$ $edges$ if they have no common end-vertices. Esfahanian and Hakimi \cite{Esfahanian} demonstrated that if \( G \) is not a star graph \( K_{1,n-1} \) and has at least four vertices, then \( \lambda_2(G) \leq \xi(G) \). So the graph \( G \) is called $maximally$ $restricted$ $edge$-$connected$ if \( \lambda_2(G) = \xi(G) \); and the graph $G$ is called $super$ $restricted$ $edge$-$connected$ if every minimum restricted edge-cut isolates an edge.

Generally, the concept of $k$-restricted edge-connectivity can be defined as follows. If an edge subset \( S \subseteq E(G) \) satisfies that \( G - S \) is disconnected and each component of \( G - S \) has at least $k$ vertices, then $S$ is called a $k$-$restricted$ $edge$-$cut$. The $k$-$restricted$ $edge$-$connectivity$ \(\lambda_k(G)\) is defined as the minimum cardinality among all $k$-restricted edge-cuts in $G$ if there exists one; otherwise, \( \lambda_k(G) = +\infty \).

For two nonempty subsets \( X, Y \subseteq V(G) \), \( [X, Y]_G \) represents the set of edges with one endpoint in \( X \) and the other in \( Y \). When \( Y = V(G) \setminus X \), the edge set \( [X, Y]_G \) is denoted as \( \partial_G(X) \). Let \( \xi_3(G) = \min \{ |\partial_G(X)||X \subseteq V(G), |X| = 3 \text{ and } G[X] \text{ is connected} \} \). Wang and Li \cite{Wang1} provided necessary and sufficient conditions for the existence of 3-restricted edge-cuts in graphs and demonstrated that \( \lambda_3(G) \leq \xi_3(G) \) holds when there is at least one 3-restricted edge-cut in $G$. So the graph \( G \) is termed $maximally$ $3$-$restricted$ $edge$-$connected$ if \( \lambda_3(G) = \xi_3(G) \); and the graph \( G \) is called $super$ $3$-$restricted$ $edge$-$connected$ if every minimum 3-restricted edge-cut isolates a component of order three.

The concept of graph products serves as a potent tool for constructing larger graphs from smaller components. There are several common graph products, including the Cartesian product, the direct product, and the strong product, etc. Given two graphs \( G \) and \( H \), the vertex sets of the $Cartesian$ $product$ \( G \square H \), the $direct$ $product$ \( G \times H \) and the $strong$ $product$ \( G \boxtimes H \) are all defined as \( V(G) \times V(H) \). For two distinct vertices \( (x_1, y_1) \) and \( (x_2, y_2) \),
they are adjacent in \( G \square H \) if and only if \( x_1 = x_2 \) and \( y_1y_2 \in E(H) \), or \( y_1 = y_2 \) and \( x_1x_2 \in E(G) \);
they are adjacent in \( G \times H \) if and only if \( x_1x_2 \in E(G) \) and \( y_1y_2 \in E(H) \);
they are adjacent in \( G \boxtimes H \) if and only if \( x_1 = x_2 \) and \( y_1y_2 \in E(H) \), or \( y_1 = y_2 \) and \( x_1x_2 \in E(G) \), or both \( x_1x_2 \in E(G) \) and \( y_1y_2 \in E(H) \). It is evident that the edge set of \( G \boxtimes H \) comprises the union of the edge sets of \( G \square H \) and \( G \times H \).

Klav{\v{z}}ar and {\v{S}}pacapan \cite{Klavzar} determined the edge-connectivity of the Cartesian product of any two nontrivial connected graphs. Wang, Ou and Zhu \cite{Wang2} studied the restricted edge-connectivity of the Cartesian product of two regular graphs. Ou \cite{Ou1} obtained the restricted edge connectivity of the Cartesian product of two maximally edge-connected regular graphs.

Bre{\v{s}}ar and {\v{S}}pacapan \cite{Bresar2} provided some lower and upper bounds on the edge-connectivity of the direct product of two nontrivial connected graphs. Cao, Brglez, {\v{S}}pacapan and Vumar \cite{Cao} obtained the edge-connectivity of the direct product of a nontrivial graph with a complete graph. {\v{S}}pacapan \cite{Spacapan} not only determined the edge-connectivity of the direct product of two graphs but also characterized the structure of the minimum edge-cut for the direct product of two graphs.
Ma, Wang and Zhang \cite{Ma} determined the restricted edge-connectivity of the direct product of a nontrivial connected graph with a complete graph. Bai, Tian and Yin [1] further gave sufficient conditions for the direct product of a nontrivial graph with a complete graph to be super restricted edge-connected. Guo, Hu, Yang and Zhao \cite{Guo} investigated the restricted edge-connectivity of the direct product of a connected graph \( G \) that satisfies \( |V(G)| \leq n \) or \( \Delta(G) \leq n - 1 \) with an odd cycle of order $n$.

The edge-connectivity of the strong product of two nontrivial connected graphs was established by Bre\v{s}ar and \v{S}pacapan \cite{Bresar1}. Subsequently, Ou and Zhao \cite{Ou2} focused on the restricted edge-connectivity of the strong product of two triangle-free connected graphs. Wang, Mao, Ye and Zhao \cite{Wang3} derived an expression for the restricted edge-connectivity of the strong product of two maximally restricted edge-connected graphs.

Let $C_n$ and $K_n$ denote the $cycle$ and the $complete$ $graph$ of order $n$, respectively. Ye and Tian \cite{Ye} gave the restricted edge-connectivity of the strong product of a nontrivial connected graph with a cycle and a complete graph.

\begin{theorem}(\cite{Ye}\label{1})
Let $G$ be a connected nontrivial graph of order $m$. Then $\lambda_2(G \boxtimes C_n) = \min\{3n\lambda(G), 2(m + 2e(G)), 6\delta(G) + 2\}$, where $n \geq 3$.
\end{theorem}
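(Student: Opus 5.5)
We prove the formula by showing that each of the three quantities is an upper bound, and then that every restricted edge-cut has at least the minimum of the three. Write $V(C_n)=\{c_1,\dots,c_n\}$ in cyclic order. We use two kinds of fibers: the $G$-layers $G^{c_i}=G\times\{c_i\}$ and the $C_n$-fibers ${}^{x}C=\{x\}\times V(C_n)$.

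\textbf{Upper bounds.} We exhibit three restricted edge-cuts.

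First, let $F=\partial_G(X)$ be a minimum edge-cut of $G$. Then $\partial(X\times V(C_n))$ is an edge-cut of $G\boxtimes C_n$. For every edge $xy\in F$ and every $c_i$, the vertex $(x,c_i)$ is joined to $(y,c_{i-1})$, $(y,c_i)$ and $(y,c_{i+1})$. Hence the cut has exactly $3n\lambda(G)$ edges. Both sides are unions of whole $C_n$-fibers, so every component has at least $n\ge 3$ vertices.

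Second, split the cycle into two nonempty arcs and let $X$ be the union of the $G$-layers over one arc. Between two consecutive layers $G^{c_i}$ and $G^{c_{i+1}}$ there are $m$ vertical edges and $2e(G)$ diagonal edges. There are exactly two such boundaries, so the cut has $2(m+2e(G))$ edges. Each side contains a whole connected layer with $m\ge 2$ vertices.

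Third, let $x$ be a vertex of $G$ with $d_G(x)=\delta(G)$. Then
\[
d\big((x,c_i)\big)=3(\delta(G)+1)-1=3\delta(G)+2,
\]
so the edge $(x,c_1)(x,c_2)$ has edge-degree $6\delta(G)+2$. The graph $G\boxtimes C_n$ has at least $6$ vertices and is not a star, so the Esfahanian--Hakimi bound gives $\lambda_2\le\xi\le 6\delta(G)+2$.

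\textbf{Lower bound.} Let $S=\partial(X)$ be a minimum restricted edge-cut and put $\bar X=V\setminus X$. We split into three cases.

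(a) Every $C_n$-fiber lies entirely in $X$ or entirely in $\bar X$. Then $X=X_0\times V(C_n)$ for some $X_0\subset V(G)$. Each edge of $\partial_G(X_0)$ contributes $3n$ edges to $S$, so $|S|\ge 3n\lambda(G)$.

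(b) Every $G$-layer lies entirely on one side. The layers in $X$ and those in $\bar X$ alternate around the cycle at least twice. Each change of side costs the $m+2e(G)$ edges between two consecutive layers, so $|S|\ge 2(m+2e(G))$.

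(c) Some $G$-layer and some $C_n$-fiber are both split by $S$. Here we aim to show $|S|\ge 6\delta(G)+2$ by double counting.

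\begin{itemize}
\item A split $C_n$-fiber contains at least $2$ edges of $S$, since a cycle cut into two nonempty parts loses at least two edges. These edges are disjoint for different fibers. So if at least $3\delta(G)+1$ $C_n$-fibers are split, we are done.
\item Otherwise, some $C_n$-fibers are unsplit. If unsplit fibers ${}^{x}C\subseteq X$ and ${}^{y}C\subseteq\bar X$ both exist, take an $x$--$y$ path in $G$. Lifting it through the layers gives many edge-disjoint $X$--$\bar X$ paths in $G\boxtimes C_n$, and we count how many edges of $S$ they force.
\item If all unsplit fibers lie on the same side, say $X$, then $\bar X$ meets only the few split fibers. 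In that case $\bar X$ is small, and we bound $|\partial(\bar X)|\ge \sum_{v\in\bar X} d(v)-2e(\bar X)$. Each vertex has degree at least $3\delta(G)+2$, and $\bar X$ has at least $2$ vertices. We would then show that this expression is minimized when $\bar X$ is a single edge, which gives exactly $6\delta(G)+2$.
\end{itemize}

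\textbf{Main obstacle.} We expect case (c) to be the hardest part. There the counts from split $C_n$-fibers, split $G$-layers (each contributing at least $\lambda(G)$ horizontal edges plus diagonal edges to adjacent layers) and small-set degree estimates must be combined so that they always reach $\min\{3n\lambda(G),\,6\delta(G)+2\}$. We would first handle this by bounding $|\bar X|$ via the number of split fibers, and then treat the small-$|\bar X|$ and large-$|\bar X|$ regimes separately.
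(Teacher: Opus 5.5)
Your three upper bounds and your cases (a) and (b) are correct. Case (c), however, carries the whole content of the lower bound, and it is not proved; its internal case split also has a hole.

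\textbf{The gaps in case (c).}

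\emph{First}, the passage from your first bullet to your second is a non sequitur. Having fewer than $3\delta(G)+1$ split $C_n$-fibers does not produce an unsplit fiber when $m\le 3\delta(G)$ (e.g.\ $G=K_m$), and both later bullets presuppose one. If every $C_n$-fiber is split, the $2m\le 6\delta(G)$ vertical edges do not suffice. You need the Yang--Xu lemma (Lemma 2.2): for every edge $xy$ of $G$, at least $2\lambda(C_n)=4$ of the $3n$ edges between ${}^{x}C$ and ${}^{y}C$ lie in $S$, so $|S|\ge 2(m+2e(G))$. Symmetrically, $|S|\ge 3n\lambda(G)$ if every $G$-layer is split; this is the argument of Lemma 2.6.

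\emph{Second}, in your second bullet a single lifted $x$--$y$ path yields only $3n$ edge-disjoint $X$--$\bar X$ paths. That is below the target when $\lambda(G)\ge 2$. Start instead from $\lambda(G)$ edge-disjoint $x$--$y$ paths (Menger) and lift each with slopes $0,\pm 1$ from all $n$ layers, which gives $3n\lambda(G)$.

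\emph{Third}, and decisively, your third bullet is only a hope. $\bar X$ need not be small: the only a priori bound is $|\bar X|\le 3\delta(G)(n-1)$. The claim that $\sum_{v\in\bar X}d(v)-2e(\bar X)$ is minimized by a single edge is exactly what has to be shown. Degree counting alone cannot do it, because larger sets can carry many internal edges.

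\textbf{The missing idea is structural.} The paper quotes this theorem from Ye and Tian without proof, but its proof of Theorem 3.1 shows the template. After the all-fibers-split and all-layers-split cases are disposed of by Lemma 2.2, there exist an unsplit fiber ${}^{x_a}C$ and an unsplit layer $G^{c_b}$. They share the vertex $(x_a,c_b)$, so both lie in the same component $D_2$. Consequently:
\begin{itemize}
\item $p(V(D_1))$ is an arc $c_{s+1},\dots,c_{s+t}$ with $t\le n-1$;
\item every $X_i=V(D_1)\cap V(G^{c_{s+i}})$ is a proper nonempty subset of its layer;
\item the flanking layers $G^{c_s}$ and $G^{c_{s+t+1}}$ lie entirely in $D_2$.
\end{itemize}

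Counting at the ends of the arc then works even when $\lambda(G)<\delta(G)$.

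If $t=1$, then $|S|\ge|\partial_{G^{c_{s+1}}}(X_1)|+2|X_1|(\delta(G)+1)\ge 6\delta(G)+2$, using $|\partial_{G^{c_{s+1}}}(X_1)|\ge 2\delta(G)-2$ when $|X_1|=2$.

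If $t\ge 2$, the bottom end contributes at least $3\delta(G)+1$ edges:
\begin{itemize}
\item if $|X_1|=1$: $\delta(G)+1$ edges into $G^{c_s}$, $\delta(G)$ horizontal edges, and at least $\delta(G)$ edges from $X_2$ to $V(G^{c_{s+1}})\setminus X_1$;
\item if $|X_1|=2$: $2(\delta(G)+1)+2\delta(G)-2$;
\item if $|X_1|\ge 3$: $3(\delta(G)+1)$.
\end{itemize}
The same bound holds at the top end, and the two ends use disjoint edges. Altogether $|S|\ge 6\delta(G)+2$.
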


\begin{theorem}(\cite{Ye}\label{2})
Let $G$ be a connected nontrivial graph of order $m$. Then $\lambda_2(G \boxtimes K_n) = \min\{n^2\lambda(G), (n - 1)(m + 2e(G)), 2n\delta(G) + 2n - 4\}$, where $n \geq 4$.
\end{theorem}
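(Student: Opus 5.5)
The identity splits into an upper bound, obtained from three explicit restricted edge-cuts of $G\boxtimes K_n$, and a matching lower bound, obtained by analysing how a minimum restricted edge-cut meets the $K_n$-fibers. Throughout, write $V(K_n)=\{1,\dots,n\}$, let $K_n^x=\{x\}\times V(K_n)$ denote the fiber over $x\in V(G)$, and let $G^v=V(G)\times\{v\}$ denote the layer over $v$.

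\textbf{Upper bound.} I would exhibit three restricted edge-cuts.
(i) Let $F$ be a minimum edge-cut of $G$ with sides $A$ and $\bar A$. Then $\partial(A\times V(K_n))$ consists of exactly $n^2$ product edges for each edge of $F$. Both sides induce connected subgraphs with at least $n\ge 2$ vertices, so this cut has size $n^2\lambda(G)$.
(ii) Take $X=G^v$. A vertex $(x,v)$ has $n-1$ neighbours $(x,w)$ with $w\neq v$. For each edge $xx'$ of $G$ there are also $2(n-1)$ diagonal edges $(x,v)(x',w)$ and $(x',v)(x,w)$ with $w\ne v$. Hence $|\partial(X)|=(n-1)(m+2e(G))$. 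Both $G^v\cong G$ and $G\boxtimes K_{n-1}$ are connected and nontrivial.
(iii) Pick $x$ with $d_G(x)=\delta(G)$ and two vertices $u\ne v$ of $K_n$. Then $d((x,u))=(d_G(x)+1)n-1$, so the edge $(x,u)(x,v)$ has edge-degree $2nd_G(x)+2n-4$. Its removal leaves a connected remainder with more than one vertex, since $n\ge 4$ and the fibers are cliques. This gives the third term.

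\textbf{Lower bound.} Let $S=\partial(X)$ be a minimum restricted edge-cut, with $X$ and $\bar X$ inducing connected graphs of order at least $2$. For each $x$ put $a_x=|X\cap K_n^x|$, and partition $V(G)$ into $I=\{x:a_x=n\}$, $O=\{x:a_x=0\}$ and $M=\{x:0<a_x<n\}$. The edges of $S$ split into fiber edges and edges between adjacent fibers.

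\emph{Case $M=\emptyset$.} Then $I$ and $O$ are both nonempty, and $S$ contains all $n^2$ edges over each edge of $[I,O]_G$. So $|S|\ge n^2\lambda(G)$.

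\emph{Case $M=V(G)$.} Each fiber contributes $a_x(n-a_x)\ge n-1$. For an edge $xx'$ with $a=a_x$ and $b=a_{x'}$, the crossing edges number $a(n-b)+b(n-a)$. This expression is bilinear on $[1,n-1]^2$, so its minimum is attained at a corner and equals $2(n-1)$. Summing over fibers and edges gives $|S|\ge (n-1)(m+2e(G))$.

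\emph{Mixed case, $\emptyset\ne M\ne V(G)$.} I expect this to be the main obstacle. Each $x\in M$ still contributes at least $n-1$ fiber edges. Each edge $xx'$ with $x\in I\cup M$ and $x'\in O\cup M$ contributes at least $a_x(n-a_{x'})$ crossing edges, and this is at least $n$ whenever one endpoint is in $I\cup O$. I would split further by the sizes involved.

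\emph{Subcase $X$ or $\bar X$ small.} Suppose $X$ lies in one or two fibers with small $a_x$. Then I would use $|S|=\sum_{z\in X}d(z)-2e(X)$. Every vertex has degree at least $(\delta+1)n-1$ and $e(X)\le\binom{|X|}{2}$. So for $2\le |X|\le n$ this quantity is minimised at $|X|=2$, giving at least $2n\delta+2n-4$. The same argument applies to $\bar X$.

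\emph{Subcase $X$ and $\bar X$ both large.} Here I would compare with either (i) or (ii). Suppose both $I\cup M$ and $O\cup M$ are proper and $I,O$ are not both empty. Take $\lambda(G)$ edge-disjoint paths in $G$ joining $I\cup M$ to $O$ (or $I$ to $O\cup M$). Along each path, the first transition out of the full or empty region costs at least $n\cdot(n-a)$ crossing edges. Together with the fiber edges of the $M$-vertices on the path, this should total at least $n^2$ per path, or else the bound from (ii) is already exceeded once $|M|$ is large. Making this accounting rigorous, in particular avoiding double counting of fiber edges shared by several paths, is the step I expect to require the most care. If it fails, I would fall back on a direct comparison: move $X$ to the nearest configuration from (i) or (ii) and show that $|S|$ does not decrease.
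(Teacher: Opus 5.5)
Your upper bound and the two extreme cases $M=\emptyset$ and $M=V(G)$ are correct. The gap is the mixed case, which is where the theorem actually lives: it is not proved, and the mechanism you sketch for it fails. The per-path claim (at least $n^2$ crossing edges per path, or else (ii) is exceeded) is false as soon as $I=\emptyset$ or $O=\emptyset$.

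Take $X=A\times\{v\}$ with $G[A]$ connected and $2\le|A|<m$. Then $I=\emptyset$, $M=A$, and $|S|=(n-1)\sum_{x\in A}(d_G(x)+1)+|\partial_G(A)|$.
\begin{itemize}
\item An $M$--$O$ path leaves $M$ through a fiber with $a_x=1$. It pays only $n$ crossing edges there, plus $n-1$ fiber edges, nowhere near $n^2$.
\item One checks $|S|<(n-1)(m+2e(G))$ for every proper $A$. So the escape to (ii) is unavailable, and your fallback fails too, since replacing $X$ by $G^v$ increases the cut.
\end{itemize}
What controls such configurations is a global count over all of $X$, compared with the third term (or with (i)), not a per-path count. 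In addition, your small-$X$ degree count is stated only for $|X|\le n$; it works verbatim up to $|X|\le n\delta(G)+n-2$. Meanwhile ``large'' is never defined, so the intermediate range is simply left open.

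\textbf{When $I\ne\emptyset\ne O$.} Your path idea does work here, and it needs no fiber edges, so there is no double counting. Over $xx'\in E(G)$ the crossing edges number $a_x(n-a_{x'})+a_{x'}(n-a_x)\ge n|a_x-a_{x'}|$. This telescopes along each of $\lambda(G)$ edge-disjoint $I$--$O$ paths to at least $n^2$, giving $|S|\ge n^2\lambda(G)$.

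\textbf{When $I=\emptyset\ne O$.} The sets $L_s=\{x:a_x>s\}$ for $0\le s<a_{\max}$ are nonempty proper subsets of $V(G)$. Hence the inter-fiber edges contribute at least $n\sum_s|\partial_G(L_s)|\ge n\,a_{\max}\lambda(G)$. The fiber edges contribute $\sum_x a_x(n-a_x)\ge(n-a_{\max})|X|$. Therefore:
\begin{itemize}
\item if $|X|\ge n\lambda(G)$, then $|S|\ge n^2\lambda(G)$;
\item otherwise $2\le|X|\le n\delta(G)-1$, and the degree count $|S|\ge|X|(n\delta(G)+n-|X|)$ gives $2n\delta(G)+2n-4$.
\end{itemize}
The case $O=\emptyset\ne I$ is symmetric.

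\textbf{The paper's route.} The paper does not prove this theorem; it quotes it from Ye and Tian. Its proof of Theorem 3.2 uses a different split that also closes the gap. If every $G$-layer meets both sides, Lemma 2.2 applied to each pair of layers gives $n\lambda(G)+\binom n2\,2\lambda(G)=n^2\lambda(G)$. Otherwise a whole fiber and a whole layer $G^{y_b}$ lie on one side. Then each vertex $(x,y)$ on the other side sends $d_G(x)+1\ge\delta(G)+1$ edges into $G^{y_b}$. Combining $|S|\ge(\delta(G)+1)|X|$ with the degree count again yields $2n\delta(G)+2n-4$.
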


Motivated by the results above, in this paper, we investigate the 3-restricted edge-connectivity of the strong product of a maximally edge-connected graph with a cycle and a complete graph. As corollaries, we establish sufficient conditions for these strong product graphs to be maximally 3-restricted edge-connected. The subsequent section will introduce relevant definitions and lemmas. The main results will be presented in Section 3. The concluding remarks are given in the last section.

\section{Preliminary}
 Let \( G \) and \( H \) be two graphs with $m$ and $n$ vertices, respectively. Set $V(G) = \{x_1, x_2, \ldots, x_m\}$ and $V(H) = \{y_1, y_2, \ldots, y_n\}$. Denote \( \mathcal{G} = G \boxtimes H \). By the definition of the strong product, for any $(x, y) \in V(\mathcal{G})$, we have $d_{\mathcal{G}}((x, y))=d_G(x)+d_{H}(y)+d_G(x)d_{H}(y)$. Define the projection \( p \) on \( V(\mathcal{G})  \) as follows: for any \( (x, y) \in V(\mathcal{G}) \), \( p(x, y) = y \). For any vertex \( x \in V(G) \), the \( H \)-layer of \( x \) in \( \mathcal{G} \) is defined as $H^x = \{ (x, y) \in V(\mathcal{G}) \mid y \in V(H) \}.$
Analogously, for any vertex \( y \in V(H) \), the \( G \)-layer of \( y \) in \( \mathcal{G} \) is given by $G^y = \{ (x, y) \in V(\mathcal{G}) \mid x \in V(G) \}.$ Clearly, \( H^{x} \cong H \) and \( G^{y} \cong G \).

Let \( H \) be a connected graph. Define \( K_2 \odot H = K_2 \boxtimes H - E(\{a\} \boxtimes H) - E(\{b\} \boxtimes H) \), where \( V(K_2) = \{a, b\} \). Here, \( \{a\} \boxtimes H \) denotes the strong product of \( H \) with the complete graph containing only one vertex \( a \), and \( \{b\} \boxtimes H \) denotes the strong product of \( H \) with the complete graph containing only one vertex \( b \). By this definition, \( K_2 \odot H \) is connected if and only if \( H \) is connected.

\begin{lemma}(\cite{Bondy}\label{4})
	For any graph $G$,
$$\sum_{u \in V(G)} d(u) = 2e(G).$$
\end{lemma}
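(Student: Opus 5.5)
The statement just given is the handshake lemma, $\sum_{u\in V(G)} d(u)=2e(G)$. I will prove it by double counting the incidences between vertices and edges. Let $N$ be the number of pairs $(u,e)$ with $u\in V(G)$, $e\in E(G)$ and $u$ an end-vertex of $e$; a loop at $u$ is counted twice, once for each of its ends. I will evaluate $N$ in two ways.

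First I group the pairs by vertex. For a fixed $u$, the number of pairs $(u,e)$ is, by definition, the number of edges incident with $u$, with loops counted twice. This is exactly $d(u)$, so $N=\sum_{u\in V(G)} d(u)$.

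Next I group the pairs by edge. Every edge $e$ has exactly two ends. If $e$ is a link, these are two distinct vertices, each giving one pair. If $e$ is a loop, it gives two pairs under the counting convention above. In either case $e$ contributes exactly $2$, so $N=2e(G)$. Comparing the two expressions for $N$ proves the lemma.

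The argument has no real obstacle. The only point needing care is the convention for loops, and even that does not matter here: every graph this paper applies the lemma to, such as $G$, $C_n$, $K_n$ and their products, is simple. For simple graphs each edge contributes exactly $1$ to the degree of each of its two endpoints, and the count is immediate.
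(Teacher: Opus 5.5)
Your double-counting argument is correct and is essentially the standard proof. The paper does not prove this lemma itself but cites Bondy and Murty, who sum the entries of the incidence matrix (entries $0,1,2$, with a loop giving $2$) row by row to get $\sum_{u} d(u)$ and column by column to get $2e(G)$; that is exactly your count of vertex--edge incidences, including your convention for loops.
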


\begin{lemma}(\cite{Yang}\label{4})
	Let $H$ be a connected graph and $S$ be an edge-cut of $K_{2} \odot H$, where $K_{2}=\{a,b\}$. If the vertices of $\{a\} \boxtimes H$ are in different components of $K_{2} \odot H - S$, and the vertices of $\{b\} \boxtimes H$ are also in different components of $K_{2} \odot H - S$, then $|S| \geqslant 2\lambda(H)$.
\end{lemma}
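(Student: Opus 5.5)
The approach is to reduce the cut $S$ to edge-cuts of $H$ via the projection $p$. Recall that in $K_2\odot H$ the only edges join $(a,y)$ to $(b,y')$ with $y'\in N_H[y]$: the "vertical" edges $(a,y)(b,y)$, and for every edge $yy'\in E(H)$ the two "crossing" edges $(a,y)(b,y')$ and $(a,y')(b,y)$. Since the vertices of $\{a\}\boxtimes H$ meet at least two components of $K_2\odot H-S$, and likewise for $\{b\}\boxtimes H$, I will work with the components $C_1,\dots,C_r$ of $K_2\odot H-S$. For each $C_i$ set $A_i=p(C_i\cap(\{a\}\boxtimes H))$ and $B_i=p(C_i\cap(\{b\}\boxtimes H))$. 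Then $\{A_i\}$ and $\{B_i\}$ are partitions of $V(H)$ (allowing empty parts), each with at least two nonempty parts. Since every edge of $S$ joins two different components, $\sum_i|\partial(C_i)|\le 2|S|$, so it suffices to prove $\sum_i|\partial(C_i)|\ge 4\lambda(H)$.

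The key local step is a counting inequality for one component. Put $A=A_i$ and $B=B_i$. An edge $(a,y)(b,y')$ lies in $\partial(C_i)$ exactly when $[y\in A]\ne[y'\in B]$. For a fixed edge $e=yy'$ of $H$, consider its two crossing edges. If $e\in\partial_H(A)$, then at least one of them lies in $\partial(C_i)$, whatever $B$ is: with $y\in A$ and $y'\notin A$, both crossing edges avoid $\partial(C_i)$ only if $y'\in B$ and $y\notin B$. The symmetric statement holds for $e\in\partial_H(B)$. A short case check should upgrade this to the following: if $e\in\partial_H(A)\cap\partial_H(B)$, then $e$ contributes $2$ unless $A$ and $B$ split $e$ in opposite directions. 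In that exceptional case the two vertical edges at $y$ and $y'$ are both in $\partial(C_i)$, and I charge the deficit to them. The aim is the inequality
$$|\partial(C_i)|\ \ge\ \tfrac12\bigl(|\partial_H(A_i)|+|\partial_H(B_i)|\bigr)+\tfrac12|A_i\triangle B_i| .$$
Here the vertical edges are counted via $|A_i\triangle B_i|$, and the halving compensates for vertical edges being charged by two $H$-edges.

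Summing over $i$ gives $\sum_i|\partial(C_i)|\ge\frac12\sum_i|\partial_H(A_i)|+\frac12\sum_i|\partial_H(B_i)|$. Each of the sums $\sum_i|\partial_H(A_i)|$ and $\sum_i|\partial_H(B_i)|$ is twice the number of $H$-edges crossing a partition of $V(H)$ into at least two nonempty parts. Since $H$ is connected, each nonempty proper part $A_i$ satisfies $|\partial_H(A_i)|\ge\lambda(H)$, and there are at least two such parts. Hence each sum is at least $2\lambda(H)$, giving $\sum_i|\partial(C_i)|\ge 2\lambda(H)$. This yields only $|S|\ge\lambda(H)$, so the halving in the local inequality is too lossy.

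The main obstacle is therefore to sharpen the local count to $|\partial(C_i)|\ge|\partial_H(A_i)|+|\partial_H(B_i)|-(\text{correction})$, where the corrections summed over $i$ are nonpositive. I would first try to show, edge by edge of $H$, that the total number of crossing edges of $yy'$ lying in $S$ is at least $[yy'\text{ crosses }\{A_i\}]+[yy'\text{ crosses }\{B_i\}]$. If $y,y'$ lie in different $A$-parts, then for $(a,y)(b,y')$ and $(a,y')(b,y)$ both to survive we need $(b,y')$ in the component of $(a,y)$ and $(b,y)$ in the component of $(a,y')$; then $y,y'$ are also in different $B$-parts. In that situation I would use the vertical edges $(a,y)(b,y)$ and $(a,y')(b,y')$, which must both be in $S$, and choose an orientation of these charges (for example along a spanning structure of $H$) so that each vertical edge is charged at most once. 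Verifying that such a charging scheme exists is the step I expect to require the most care. Once it holds, $|S|\ge\#\{\text{edges crossing }\{A_i\}\}+\#\{\text{edges crossing }\{B_i\}\}\ge\lambda(H)+\lambda(H)=2\lambda(H)$.
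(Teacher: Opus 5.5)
Your proposal has a genuine gap. The paper gives no proof of its own to compare with: it quotes this lemma from Yang and Xu. The halving route gives only $|S|\ge\lambda(H)$, as you note. The inequality your second route aims for, $|S|\ge\#\{\text{edges crossing }\{A_i\}\}+\#\{\text{edges crossing }\{B_i\}\}$, is false, so no charging scheme of the kind you describe can exist. Take $H=K_{1,k}$ with centre $0$ and leaves $1,\dots,k$, $k\ge 2$, and let $S$ be the $k+1$ vertical edges. Then $K_2\odot H-S$ has exactly two components, $\{(a,0),(b,1),\dots,(b,k)\}$ and $\{(b,0),(a,1),\dots,(a,k)\}$. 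Both layers are split, every edge of $S$ joins the two components, and each of the $k$ edges of $H$ crosses both partitions. Your right-hand side is therefore $2k>k+1=|S|$. All $k$ edges are in your exceptional opposite-direction case, so the total deficit is $2k$ while only $k+1$ vertical edges exist.

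Your per-edge analysis also misses a second kind of deficit. With three or more components, $(a,y)$ and $(b,y')$ may lie in one component while $(a,y')$ and $(b,y)$ lie in two further, distinct components. Then $yy'$ crosses both partitions, but only one of its crossing edges is in $S$.

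The missing idea is to apply $\lambda(H)$ to sets other than the $A_i$ and $B_i$.
\begin{itemize}
\item First pass to a two-sided cut. Choose a union $U$ of components such that both $U$ and its complement meet both layers: a single component meeting both layers works if one exists; otherwise take the union of a component meeting the $a$-layer and one meeting the $b$-layer. Then $\partial(U)\subseteq S$.
\item With $A=p(U\cap(\{a\}\boxtimes H))$ and $B=p(U\cap(\{b\}\boxtimes H))$, put $I=A\cap B$ and $L=V(H)\setminus(A\cup B)$.
\item For every edge $yy'\in E(H)$ with $y\in I$, $y'\notin I$, at least one of $(a,y)(b,y')$, $(a,y')(b,y)$ lies in $\partial(U)$. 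The same holds for edges of $\partial_H(L)$. An $I$--$L$ edge has both of its crossing edges in $\partial(U)$, so these choices can be made injectively.
\item Hence if $I\neq\emptyset\neq L$ (so both are disjoint, hence proper), $|S|\ge|\partial_H(I)|+|\partial_H(L)|\ge 2\lambda(H)$.
\item If $I=\emptyset$, then $A\cap B=\emptyset$ and $|\partial(U)|=\sum_{y\in A}|N_H[y]\setminus B|+\sum_{y\in B}|N_H[y]\setminus A|$, with every term at least $1$. Fix $u\in A$, $w\in B$. Using $|N_H[u]\setminus B|+|B|-1\ge d_H(u)$ and $|N_H[w]\setminus A|+|A|-1\ge d_H(w)$ gives $|S|\ge 2\delta(H)\ge 2\lambda(H)$.
\item The case $L=\emptyset$ is symmetric: replace $U$ by its complement.
\end{itemize}
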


\begin{lemma}(\cite{Wang1}\label{5})
	The graph $G$ has a 3-restricted edge-cut if and only if $G$ has two vertex-disjoint paths of order 3.
\end{lemma}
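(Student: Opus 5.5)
The plan is to prove the two directions separately. Throughout, $G$ is taken to be connected, as in the setting of the paper. This assumption is genuinely needed: a component of $G$ of order at most $2$ survives as a component of $G-S$ for every $S$.

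\emph{Necessity.} Suppose $S$ is a 3-restricted edge-cut of $G$. Then $G-S$ has at least two components $C_1,C_2$, each connected with at least three vertices. In a connected graph of order at least $3$, a spanning tree has a vertex $v$ of degree at least $2$. Any two of its tree-neighbours $u,w$ give a path $uvw$ of order $3$. Taking such a path inside $C_1$ and another inside $C_2$ yields two vertex-disjoint paths of order $3$.

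\emph{Sufficiency.} Let $P$ and $Q$ be vertex-disjoint paths of order $3$.
\begin{itemize}
\item Consider all partitions $(X,Y)$ of $V(G)$ with $V(P)\subseteq X$ and $V(Q)\subseteq Y$. For each one, set $S=\partial_G(X)$.
\item The components of $G-S$ are exactly the components of $G[X]$ together with those of $G[Y]$. So $G-S$ is disconnected, and the components containing $P$ and $Q$ have at least $3$ vertices.
\item Call a component \emph{small} if it has order at most $2$. Let $\phi(X,Y)$ be the total number of vertices lying in small components. Choose $(X,Y)$ minimizing $\phi$; it suffices to show $\phi(X,Y)=0$, for then $S$ is a 3-restricted edge-cut.
\end{itemize}

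\emph{Key step.} Suppose some small component $C$ exists, say of $G[X]$ (the case of $G[Y]$ is symmetric). Then $C$ contains no vertex of $P$. Since $G$ is connected and $C\neq V(G)$, some vertex of $C$ has a neighbour outside $C$. That neighbour is not in $X$, because $C$ is a component of $G[X]$, so it lies in $Y$.

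Now move $C$ to $Y$, forming $X'=X\setminus C$ and $Y'=Y\cup C$. We still have $V(P)\subseteq X'$.
\begin{itemize}
\item The components of $G[X']$ are exactly the components of $G[X]$ other than $C$, since $C$ was a whole component.
\item In $G[Y']$, the set $C$ merges with at least one component of $G[Y]$, possibly several. So every new component is a union of old components of $G[Y]$ plus $C$, and has order at least $|C|+1$.
\item No component shrinks, and the merged component containing $C$ has at least $3$ vertices or at least absorbs $C$ into a strictly larger component.
\end{itemize}

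To make the strict decrease clean, I would instead use the lexicographic potential (number of small components, number of vertices in small components). Moving $C$ removes $C$ as a small component. The merge creates at most one new component, namely the one containing $C$, and destroys at least one old component of $G[Y]$ plus $C$. Hence the number of components of $G-S$ strictly decreases, while the count of small components does not increase.

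The cleanest formulation is therefore to minimize the total number of components of $G[X]$ and $G[Y]$. Moving $C$ strictly decreases this number. So at a minimizer no small component exists, and every component has order at least $3$.

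The only delicate point is verifying this monotonicity, which rests on $C$ being an entire component of $G[X]$ and having a neighbour in $Y$. The connectivity of $G$ is used exactly there.
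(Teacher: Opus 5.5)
The paper gives no proof of this lemma; it is quoted from Wang and Li, so there is no in-paper argument to compare against. Your argument is a correct, self-contained proof.

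You are also right that connectivity must be assumed. For $G=P_3\cup P_3\cup K_1$ the isolated vertex survives as a component of $G-S$ for every $S$, so the ``if'' direction fails as literally stated.

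Your necessity direction is fine. In the sufficiency direction, only your last potential works:
\begin{itemize}
\item The first potential, $\phi$, need not strictly decrease under the move you describe. Take $C=\{c\}$ with $c$ adjacent only to a singleton component $\{y\}$ of $G[Y]$. After the move, $\{c,y\}$ is a component of order $2$ and $\phi$ is unchanged.
\item You should drop the $\phi$ and lexicographic detours. State the argument directly with the total number of components of $G[X]$ and $G[Y]$. This total drops by exactly the number $k\ge 1$ of components of $G[Y]$ adjacent to $C$.
\item That step never uses $|C|\le 2$. So a minimizer in fact has both $G[X]$ and $G[Y]$ connected, and the resulting cut leaves exactly two components.
\end{itemize}

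This suggests a one-step construction you could use instead. Let $D$ be the component of $G-V(P)$ containing $Q$. Every other component of $G-V(P)$ has a neighbour in $V(P)$, because $G$ is connected. Hence $G-V(D)$ is connected, and $\partial_G(V(D))$ is a 3-restricted edge-cut. Its removal leaves exactly the two components $G[V(D)]\supseteq Q$ and $G-V(D)\supseteq P$.

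Both routes prove the same thing. The direct one is shorter and needs no monotonicity check.
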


\begin{lemma}(\cite{Wang1}\label{6})
	Let \( G \) be a connected graph of order at least 6. If $G$ has a 3-restricted edge-cut, then $\lambda_{3}(G)\leq{\xi_3(G)}$.
\end{lemma}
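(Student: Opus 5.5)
The plan is to start from a connected triple realizing $\xi_3(G)$ and enlarge it by absorbing the small components of its complement. Fix $X\subseteq V(G)$ with $|X|=3$, $G[X]$ connected and $|\partial_G(X)|=\xi_3(G)$. Split $V(G)\setminus X$ into $Y$, the union of all components of $G-X$ with at most two vertices, and $Z$, the union of the components with at least three vertices.

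\emph{Case $Z\neq\emptyset$.} Put $X'=X\cup Y$. Since $G$ is connected, every component of $G[Y]$ sends at least one edge to $X$, so $G[X']$ is connected and $|X'|\ge 3$. There are no edges between $Y$ and $Z$, so $\partial_G(X')=[X,Z]_G\subseteq\partial_G(X)$. The components of $G-\partial_G(X')$ are $G[X']$ and the components of $G[Z]$, each of order at least three. Hence $\partial_G(X')$ is a 3-restricted edge-cut, and $\lambda_3(G)\le|\partial_G(X')|\le\xi_3(G)$.

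\emph{Case $Z=\emptyset$.} Now every component of $G-X$ has order one or two, and each is joined to $X$; in particular $|\partial_G(X)|\ge |V(G)|-3\ge 3$. This case has to use the hypothesis that $G$ has a 3-restricted edge-cut, which by Lemma \ref{5} means $G$ contains two vertex-disjoint paths $P,Q$ of order $3$. My first attempt would be to produce a different connected triple $X''$ with $|\partial_G(X'')|\le|\partial_G(X)|$ whose complement has a component of order at least $3$, and then rerun the first case with $X''$.

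The natural candidates for $X''$ are $V(P)$ or $V(Q)$, or a triple consisting of one vertex of $X$ together with a small component of $G-X$. Because $G-X$ has only tiny components, all adjacency outside $X$ passes through the three vertices of $X$. I expect this to make the degrees of vertices of $X$ large, namely at least $(|V(G)|-3)/3$ on average over $X$. Comparing $|\partial_G(X'')|=\sum_{v\in X''}d(v)-2e(G[X''])$ with the same expression for $X$ should then give the needed inequality. If that direct comparison fails, the fallback is to build the cut explicitly. Take $A=X$ together with the small components attached to a chosen proper subset of $X$, so that both $A$ and its complement contain one of the disjoint $P_3$'s and are connected. Then count that $|\partial_G(A)|$ is at most the number of edges leaving $X$.

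I expect this second case to be the main obstacle. The first case is routine edge bookkeeping, whereas here one must control how $P$ and $Q$ sit relative to $X$ and the order-$\le2$ components. That will likely require a small case analysis on $|V(P)\cap X|$ and $|V(Q)\cap X|$. The order bound $|V(G)|\ge 6$ is used exactly here, to guarantee enough vertices outside $X$ for the counting.
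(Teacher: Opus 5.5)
The paper gives no proof of this lemma; it quotes it from Wang and Li. Your proposal therefore has to stand on its own. Your case $Z\neq\emptyset$ is correct and complete. The case $Z=\emptyset$, however, contains all of the lemma's content, and there you offer intentions rather than an argument. Several of the intended steps are also wrong:
\begin{itemize}
\item The claim $|\partial_G(X)|\ge |V(G)|-3$ is false. A two-vertex component of $G-X$ may be joined to $X$ by a single edge, so you only get $|\partial_G(X)|\ge r\ge\lceil(|V(G)|-3)/2\rceil$, where $r$ is the number of components of $G-X$. The hoped-for average degree $(|V(G)|-3)/3$ on $X$ therefore has no basis.
\item Your fallback cannot work. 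If $A\supseteq X$, then $V(G)\setminus A$ is a union of components of $G-X$ with no edges between them. It induces a connected graph only if it is a single component of order at most $2$, which cannot contain a $P_3$.
\item More generally, in this case every component of $G-S$, for any 3-restricted edge-cut $S$, must meet $X$, because a connected vertex set avoiding $X$ lies inside one component of $G-X$. So every 3-restricted edge-cut separates the vertices of $X$.
\item Your ``first attempt'', finding a triple $X''$ with $|\partial_G(X'')|\le|\partial_G(X)|$ and a component of order at least $3$ in $G-X''$, is only asserted. No such triple is exhibited and no comparison is carried out.
\end{itemize}

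What is missing is an argument that builds a cut splitting $X$ and bounds it by $|\partial_G(X)|$. A natural start is to take $x\in X$ that is not a cut vertex of $G[X]$. Let $A$ consist of $x$ together with every component $C$ of $G-X$ satisfying $|[C,\{x\}]_G|>|[C,X\setminus\{x\}]_G|$. Then $G[A]$ and $G[V(G)\setminus A]$ are both connected, and
\[
|\partial_G(A)|=|\partial_G(X)|+d_{G[X]}(x)-\sum_{C}\max\{|[C,\{x\}]_G|,|[C,X\setminus\{x\}]_G|\}\le|\partial_G(X)|+2-r\le|\partial_G(X)|,
\]
since $|V(G)|\ge 6$ forces $r\ge 2$. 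The real difficulty, which your proposal does not touch, is to choose $x$, and possibly reassign components, so that both sides have at least three vertices without exceeding $|\partial_G(X)|$. This needs the minimality of $X$ and the two disjoint paths $P,Q$. Note that each of $P,Q$ must meet $X$, so one of them meets $X$ in exactly one vertex. Until that case analysis is supplied, your proof covers only the easy case.
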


\begin{lemma}
	Let \( G \) and \( H \) be connected nontrivial graphs of order \( m\geq3 \) and \( n \geq 3 \), respectively. Then
	\[
	\lambda_3(G \boxtimes H) \leq \min\{(n + 2e(H))\lambda(G), (m + 2e(G))\lambda(H)\}.
	\]
\end{lemma}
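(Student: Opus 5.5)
By the symmetry $G\boxtimes H\cong H\boxtimes G$, it is enough to prove $\lambda_3(G\boxtimes H)\le (n+2e(H))\lambda(G)$. The plan is to take a minimum edge-cut of $G$, lift it to an edge set in $\mathcal{G}=G\boxtimes H$, and show that this set is a 3-restricted edge-cut of exactly the required size.

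Let $F$ be a minimum edge-cut of $G$, so $|F|=\lambda(G)$. Since $G$ is connected, $G-F$ has exactly two components, with vertex sets $X$ and $\overline{X}=V(G)\setminus X$, and $F=\partial_G(X)$. Put $A=X\times V(H)$ and $B=\overline{X}\times V(H)$, and take $S=\partial_{\mathcal{G}}(A)$.

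First we count $S$. An edge of $\mathcal{G}$ joining $(x,y)\in A$ to $(x',y')\in B$ has $x\ne x'$, so by the definition of the strong product it must satisfy $xx'\in E(G)$, i.e.\ $xx'\in F$. In addition, either $y=y'$ or $yy'\in E(H)$. Fix an edge $xx'\in F$. The pairs $(y,y')$ with $y=y'$ contribute $n$ edges. Each edge $yy'$ of $H$ contributes the two ordered pairs $(y,y')$ and $(y',y)$, giving $2e(H)$ edges. Hence each edge of $F$ gives exactly $n+2e(H)$ edges of $S$, and
\[
|S|=(n+2e(H))\lambda(G).
\]

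Next we check that $S$ is a 3-restricted edge-cut. Removing $S$ leaves $\mathcal{G}[A]=G[X]\boxtimes H$ and $\mathcal{G}[B]=G[\overline{X}]\boxtimes H$ with no edges between them, so $\mathcal{G}-S$ is disconnected. Each of these is the strong product of two connected graphs, and such a product is connected, since the Cartesian product is a spanning subgraph of it. So the components of $\mathcal{G}-S$ are exactly $\mathcal{G}[A]$ and $\mathcal{G}[B]$. Their orders are $|X|n\ge n\ge 3$ and $|\overline{X}|n\ge 3$. Therefore $S$ is a 3-restricted edge-cut, and $\lambda_3(\mathcal{G})\le|S|$.

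Applying the same argument with the roles of $G$ and $H$ exchanged, using a minimum edge-cut of $H$ and the hypothesis $m\ge3$, gives the second bound $(m+2e(G))\lambda(H)$. Taking the smaller of the two bounds proves the lemma. The only point needing care is the edge count, in particular confirming that no edge with $x=x'$ crosses between $A$ and $B$; the rest is routine.
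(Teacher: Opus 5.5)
Your proposal is correct and follows the same route as the paper: you lift a minimum edge-cut $[X,\overline{X}]_G$ of $G$ to $[X\times V(H),\overline{X}\times V(H)]_{G\boxtimes H}$, count $n+2e(H)$ product edges for each cut edge, and argue symmetrically with $G$ and $H$ exchanged. You additionally supply details the paper leaves implicit, namely the exact edge count and why both sides induce connected strong products of order at least $3$.
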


\noindent{\bf Proof.}
	Let $[X,\overline{X}]_G$ be a minimum edge-cut of $G$. Then $[X \times V(H), \overline{X} \times V(H)]_{G \boxtimes H}$ is a 3-restricted edge-cut of $G \boxtimes H$, and its size is $(n + 2e(H))\lambda(G)$. Hence, $\lambda_3(G \boxtimes H) \leq (n + 2e(H))\lambda(G)$. Analogously, let $[Y,\overline{Y}]_H$ be a minimum edge-cut of $H$. Then $[V(G) \times Y, V(G) \times \overline{Y}]_{G \boxtimes H}$ is also a 3-restricted edge-cut of $G \boxtimes H$, and its size is $(m + 2e(G))\lambda(H)$. Thus, $\lambda_3(G \boxtimes H) \leq (m + 2e(G))\lambda(H)$. Therefore, $\lambda_3(G \boxtimes H) \leq \min\{(n + 2e(H))\lambda(G), (m + 2e(G))\lambda(H)\}$.
$\square$

\begin{lemma}
Let \( G \) and \( H \) be connected nontrivial graphs of order \( m \geq 3 \) and \( n \geq 3 \), respectively. Let \( S \) be a minimum 3-restricted edge-cut of \( G \boxtimes H \), and \( D_1 \) and \( D_2 \) be the two components of \( G \boxtimes H - S \). If each vertex \( x \in V(G) \) satisfies \( H^x \cap D_1 \neq \emptyset  \) and \( H^x \cap D_2 \neq \emptyset  \), or each vertex \( y \in V(H) \) satisfies \( G^y \cap D_1 \neq \emptyset  \) and \( G^y \cap D_2 \neq \emptyset  \), then \( |S| \geq (m + 2e(G))\lambda(H) \) or \( |S| \geq (n + 2e(H))\lambda(G) \).
\end{lemma}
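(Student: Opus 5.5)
The plan is to treat the two hypotheses separately; they are symmetric under exchanging the roles of $G$ and $H$. Suppose first that every $H$-layer $H^x$ meets both $D_1$ and $D_2$. Then I will show $|S|\geq (m+2e(G))\lambda(H)$. In the symmetric case, where every $G$-layer $G^y$ meets both $D_1$ and $D_2$, the same argument with $G$ and $H$ interchanged gives $|S|\geq (n+2e(H))\lambda(G)$.

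The first step is to partition the edge set of $G\boxtimes H$ into pieces of two kinds.
\begin{itemize}
\item For each $x\in V(G)$, take the edges inside the layer $H^x$. They induce a copy of $H$.
\item For each edge $xx'\in E(G)$, take all edges between $H^x$ and $H^{x'}$. These are the edges $(x,y)(x',y)$ and $(x,y)(x',y')$ with $yy'\in E(H)$. Together with the vertex set $H^x\cup H^{x'}$ they form exactly a copy of $K_2\odot H$, with $a=x$ and $b=x'$.
\end{itemize}
By the definition of the strong product, every edge lies in exactly one of these pieces. Hence $|S|$ is at least the sum, over all pieces, of the number of edges of $S$ in that piece.

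Next I bound each piece. Since $S$ is an edge-cut separating $D_1$ from $D_2$, any two vertices lying in different $D_i$ are disconnected in $G\boxtimes H-S$. They are therefore also disconnected in any subgraph with the edges of $S$ removed.

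For a layer $H^x\cong H$: by hypothesis $H^x$ contains vertices of both $D_1$ and $D_2$. So $S\cap E(H^x)$ is an edge-cut of the connected graph $H^x$, and it has at least $\lambda(H)$ edges.

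For an edge $xx'\in E(G)$: the layer $H^x$ has vertices in $D_1$ and in $D_2$. These lie in different components of $(K_2\odot H)-S$, and the same holds for $H^{x'}$. Thus the edges of $S$ in this copy of $K_2\odot H$ form an edge-cut satisfying the hypothesis of the lemma of \cite{Yang}. That lemma gives at least $2\lambda(H)$ edges of $S$ in this piece.

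Summing over the $m$ layers and the $e(G)$ edges of $G$ gives $|S|\geq m\lambda(H)+2e(G)\lambda(H)=(m+2e(G))\lambda(H)$. This completes the first case, and the second follows by symmetry.

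The only delicate point is applying the lemma of \cite{Yang} correctly. That lemma requires the vertices of $\{a\}\boxtimes H$ to lie in different components of $(K_2\odot H)-S$, not merely of $G\boxtimes H-S$. This follows from the observation above: disconnection in the whole graph implies disconnection in any subgraph. The other point to check is that the pieces are genuinely edge-disjoint, which holds because each edge of $G\boxtimes H$ either has both ends in one $H$-layer or joins two layers $H^x,H^{x'}$ with $xx'\in E(G)$.
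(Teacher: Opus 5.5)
Your proof is correct and follows the paper's argument: partition the edges of $G\boxtimes H$ into the $m$ layer copies of $H$ and the $e(G)$ copies of $K_2\odot H$, then bound the former by $\lambda(H)$ each and the latter by $2\lambda(H)$ each via the lemma of \cite{Yang}. You also make explicit two points the paper leaves implicit: the pieces are edge-disjoint, and disconnection in $G\boxtimes H-S$ passes to disconnection in each subgraph.
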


\noindent{\bf Proof.} Assume that for each vertex \( x_i \in V(G) \), we have \( H^{x_i} \cap D_1 \neq \emptyset \) and \( H^{x_i} \cap D_2 \neq \emptyset  \). For \( 1 \leq i \leq m \), let \( Y_i = V(H^{x_i}) \cap V(D_1) \) and \( \overline{Y}_i = V(H^{x_i}) \setminus Y_i \). By Lemma 2.2, for any edge \( uv \in E(G) \), we have $|E(G[\{u, v\}] \odot H) \cap S| \geq 2\lambda(H).$
Therefore, we obtain

\begin{align*}
|S| &\geq \sum_{i=1}^m | [Y_i,\overline{Y}_i]_{H^{x_i}} | + \sum_{e=uv\in E(G)} | E(G[\{u,v\}] \odot H) \cap S | \\
&\geq m \cdot \lambda(H) + e(G) \cdot 2\lambda(H) \\
&= (m + 2e(G))\lambda(H).
\end{align*}

Analogously, if each vertex \( y_j \in V(H) \) satisfies \( G^{y_j} \cap D_1 \neq \emptyset  \) and \( G^{y_j} \cap D_2 \neq \emptyset  \), then we obtain $|S| \geq (n + 2e(H))\lambda(G).$
$\square$

\section{Main Results}
\begin{theorem}
	Let $G$ be a maximally edge-connected graph of order $m$ $(\geq5)$. If $\delta(G)\geq2$ and $n\geq4$, then
\[
\lambda_3(G \boxtimes C_n)=\xi_3(G \boxtimes C_n)=\begin{cases}9\delta(G), & \text{if } \xi(G)=2\delta(G)-2;\\9\delta(G)+2, & \text{otherwise,}\end{cases}
\]
that is, $G \boxtimes C_n$ is maximally 3-restricted edge-connected.
\end{theorem}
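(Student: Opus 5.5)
The plan has two halves. First we compute $\xi_3(G\boxtimes C_n)$ exactly. Then we show that no 3-restricted edge-cut is smaller; Lemma 2.4 supplies the matching upper bound.

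\textbf{Computing $\xi_3$.} Write $\delta=\delta(G)$ and $\mathcal{G}=G\boxtimes C_n$. Every vertex satisfies $d_{\mathcal{G}}((x,y))=3d_G(x)+2$. For a connected 3-set $X$ we have $|\partial_{\mathcal{G}}(X)|=\sum_{v\in X}d_{\mathcal{G}}(v)-2e(\mathcal{G}[X])$, with $e(\mathcal{G}[X])\in\{2,3\}$. Since $n\ge4$, $C_n$ has no triangle, so the cases are as follows.
\begin{itemize}
\item A triangle in $\mathcal{G}$ projects onto at least two distinct vertices of $G$, and in every case contains two distinct vertices adjacent in $G$. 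Examples are $\{(x,y),(x,y'),(x',y)\}$ with $xx'\in E(G)$, $yy'\in E(C_n)$, or a triangle of some $G$-layer.
\item If those two adjacent vertices both have degree $\delta$, i.e.\ $\xi(G)=2\delta-2$, then $X=\{(x,y),(x,y'),(x',y)\}$ gives $3(3\delta+2)-6=9\delta$.
\item Otherwise every triangle has degree sum at least $(3\delta+2)+2(3\delta+2)+3-6\ge 9\delta+3$. A path in a single $C_n$-layer of a minimum-degree vertex gives $3(3\delta+2)-4=9\delta+2$.
\item Any induced path uses vertices of degree at least $3\delta+2$, so it gives at least $9\delta+2$.
\end{itemize}
This yields the two displayed values. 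Lemma 2.3 applies since $m\ge5$, $n\ge4$, so Lemma 2.4 gives $\lambda_3\le\xi_3$.

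\textbf{Lower bound, global cases.} Let $S$ be a minimum 3-restricted edge-cut. By minimality $\mathcal{G}-S$ has exactly two components $D_1,D_2$. Lemma 2.6 covers the case where every $C_n$-layer meets both components: $|S|\ge 2(m+2e(G))\ge 2m(1+\delta)\ge 10\delta+10$. It also covers the case where every $G$-layer meets both components: $|S|\ge 3n\lambda(G)=3n\delta\ge12\delta$, using that $G$ is maximally edge-connected. Both bounds exceed $9\delta+2$.

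\textbf{Lower bound, remaining case.} Otherwise some layer $C_n^{x}$ lies entirely in one component, and some layer $G^{y}$ lies entirely in one component. These two layers share the vertex $(x,y)$, so both lie in the same component, say $D_1$. Let $A=p_G(V(D_2))$ and $B=p(V(D_2))$; then $x\notin A$ and $y\notin B$.
\begin{itemize}
\item Each $G$-layer $G^{y'}$ with $y'\in B$ contains $(x,y')\in D_1$. Hence it contains at least $\lambda(G)=\delta$ edges of $S$.
\item Each $C_n$-layer $C_n^{x'}$ with $x'\in A$ contains $(x',y)\in D_1$. Hence it contains at least $2$ edges of $S$.
\item Further edges of $S$ come from the diagonal copies $G[\{u,v\}]\odot C_n$ and $K_2\odot G$ between consecutive layers. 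Lemma 2.2 and direct counting give extra lower bounds there.
\end{itemize}
So $|S|\ge \delta|B|+2|A|+(\text{diagonal terms})$. When $|B|\ge 3$, the layer terms plus the diagonal edges between consecutive $G$-layers (at least $\delta$ for each consecutive pair) should already push the bound past $9\delta+2$. When $|A|$ is large, the $2|A|$ term together with the diagonal edges does the same.

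\textbf{Main obstacle.} The hardest step is when $D_2$ is small: $|B|\le2$ and $|A|$ small. Here I would bound $|S|\ge|\partial(V(D_2))|=\sum_{v\in D_2}(3d_G(p_G(v))+2)-2e(D_2)$ directly. The aim is to show that $e(D_2)$ is at most roughly $|D_2|$ plus an amount controlled by $e(G[A])$, so that $|D_2|\ge3$ forces at least $\xi_3$. Configurations where $D_2$ has $3$ to $6$ vertices inside two consecutive $G$-layers are the delicate ones. I would handle them by comparing with $\xi_3$ via the degree-sum formula, using $d_G\ge\delta$ and the fact that vertices of degree $\delta$ are pairwise nonadjacent unless $\xi(G)=2\delta-2$. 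I expect this small-$D_2$ analysis to be the hardest part.
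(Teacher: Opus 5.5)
Your computation of $\xi_3(\mathcal{G})$ (done in more detail than the paper's ``routine to verify''), the upper bound via Lemmas 2.3--2.4, and the two global cases via Lemma 2.6 are correct and match the paper. The gap is in the remaining case, which is the core of the lower bound, and you do not actually prove it.

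\textbf{The tally is too weak.} You charge $\delta$ per $G$-layer over $B$, $2$ per $C_n$-layer over $A$, and only $\delta$ diagonal edges per consecutive pair of $G$-layers. Take $V(D_2)=\{(u,y_1),(u,y_2),(u,y_3),(v,y_2)\}$ with $uv\in E(G)$, so $|B|=3$ and $|A|=2$. Even counting both flanking pairs, your tally certifies only $3\delta+4+4\delta=7\delta+4$. This is below $9\delta+2$, and below $9\delta$ once $\delta\ge 3$, although the actual cut is $9d_G(u)+3d_G(v)-2$. So the claim that $|B|\ge 3$ ``already pushes the bound past $9\delta+2$'' does not follow from the quantities you list.

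There is also an overlap problem. The copies $G[\{u,v\}]\odot C_n$ and $K_2\odot G$ share all their diagonal edges. They also contain, respectively, the horizontal and vertical edges you already charged. So they cannot simply be added on top. Finally, the case $|B|\le 2$ is only a plan: bounding $e(D_2)$ ``roughly'' by $|D_2|$ plus something controlled by $e(G[A])$ is not an argument.

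\textbf{The missing idea.} Organize the count along $B$, and do not split off the vertical edges. Since $D_2$ is connected and $y\notin B$, the set $B$ is a path $y_{s+1}\cdots y_{s+t}$ of $C_n$. Hence the two flanking layers $G^{y_s}$ and $G^{y_{s+t+1}}$ lie entirely in $D_1$. Put $X_i=V(G^{y_{s+i}})\cap V(D_2)$. Then:
\begin{itemize}
\item each $G^{y_{s+i}}$ contains at least $\delta$ edges of $S$;
\item by Lemma 2.2, each $K_2\odot G$ between consecutive layers $y_{s+i},y_{s+i+1}$ contains at least $2\lambda(G)=2\delta$ edges of $S$, vertical edges included;
\item each vertex $(x,y_{s+1})$ of $X_1$ (resp.\ of $X_t$) sends $d_G(x)+1\ge\delta+1$ edges of $S$ into its flanking layer.
\end{itemize}
Hence $|S|\ge t\delta+2(t-1)\delta+(|X_1|+|X_t|)(\delta+1)$, which is at least $9\delta+2$ for $t\ge 3$.

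The small cases then follow after discarding $|V(D_2)|=3$, where $|S|\ge\xi_3$ trivially.
\begin{itemize}
\item $t=1$: $|S|\ge\delta+8(\delta+1)$.
\item $t=2$, $|X_1|+|X_2|\ge 5$: $|S|\ge 9\delta+5$.
\item $t=2$, $|X_1|+|X_2|=4$: replace Lemma 2.2 by the direct count $|X_1|(\delta+1-|X_2|)+|X_2|(\delta+1-|X_1|)\ge 4\delta-4$ of cross edges. This gives $|S|\ge 10\delta\ge 9\delta+2$, using $\delta\ge 2$.
\end{itemize}
No estimate of $e(D_2)$ is needed.
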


\noindent{\bf Proof.} Let $\mathcal{G} = G \boxtimes C_n$. It is routine to verify that $\xi_3(\mathcal{G})=9\delta(G)$ if $\xi(G)=2\delta(G)-2$; $\xi_3(\mathcal{G})=9\delta(G)+2$  if $\xi(G)>2\delta(G)-2$. Thus, by Lemma 2.4, we have $\lambda_3(\mathcal{G}) \leq9\delta(G)$ if $\xi(G)=2\delta(G)-2$; $\lambda_3(\mathcal{G}) \leq9\delta(G)+2$ otherwise.

Now, it suffices to prove that \[
	\lambda_3(G \boxtimes C_n) \geq \begin{cases}9\delta(G) ,\text{ if }\xi(G)=2\delta(G)-2;\\9\delta(G)+2,\text{ otherwise},&\end{cases}
	\]

Let $S$ be a minimum 3-restricted edge-cut of $\mathcal{G}$. Then $\mathcal{G} - S$ has two components $D_1$ and $D_2$, where $|V(D_1)| \geq 3$ and $|V(D_2)| \geq 3$.

	If each vertex $x_i \in V(G)$ satisfies $C_n^{x_i} \cap D_1 \neq \emptyset $ and $C_n^{x_i} \cap D_2 \neq \emptyset $, or each vertex $y_j \in V(C_n)$ satisfies $G^{y_j } \cap D_1 \neq \emptyset $ and $G^{y_j } \cap D_2 \neq \emptyset $,
then by Lemma 2.6, we have
$|S|\geq(m+2e(G))\lambda(C_n)$ or $|S|\geq(n+2e(C_n))\lambda(G)$. By Lemma 2.1, we have $2e(G)=\sum_{x \in V(G)} d(x) \geq  m\delta(G)$. Since $\lambda(G)=\delta(G), \lambda(C_n) =2$, $e(C_n) =n$, $m\geq5$ and $n\geq4$,
we have $|S|\geq\min \{(m+2e(G))\lambda(C_n), (n+2e(C_n))\lambda(G)\}=\min\{2(m+2e(G)),\ 3n\delta(G) \}\geq\min\{2(m+m\delta(G)),\ 3n\delta(G) \}\geq\min\{10(\delta(G)+1),\ 12\delta(G) \}>9\delta(G)+2.$

	Now we consider that there exists a vertex $x_a \in V(G)$ and a vertex $y_b \in V(C_n)$ such that $C_n^{x_a} \cap D_1 = \emptyset $ and $G^{y_b} \cap D_1 = \emptyset $, or $C_n^{x_a} \cap D_2 = \emptyset $ and $G^{y_b} \cap D_2 = \emptyset $. Without loss of generality, assume that there exists a vertex $x_a \in V(G)$ and a vertex $y_b \in V(C_n)$ such that $C_n^{x_a} \cap D_1 = \emptyset $ and $G^{y_b} \cap D_1 = \emptyset $. By this assumption, $ V(C_n^{x_a}) $ and $ V(G^{y_b}) $ are contained in $ D_2 $. Let $ p(V(D_1)) = \{y_{s+1}, y_{s+2}, \ldots, y_{s+t}\} $, where the addition is modulo $n$. Without loss of generality, assume $ s + t < b $. For $ 1 \leq i \leq t $, let $ X_i = V(G^{y_{s+i}}) \cap V(D_1) $ and $ \overline{X}_i = V(G^{y_{s+i}}) \setminus X_i $. Since $G^{y_{s+i}} \cong G$ and $\lambda(G) = \delta(G)$, it follows that $
|[X_i, \overline{X}_i]_{G^{y_{s+i}}}| \geq \lambda(G^{y_{s+i}}) = \lambda(G) = \delta(G)$.  By the definition of the strong product, for any \((x,y_{s + 1}) \in X_1\), we have \( |[ \{ (x,y_{s + 1}) \}, V(G^{y_s}) ]_\mathcal{G}|  = d_G(x) + 1 \geq \delta(G) + 1\). Thus \(|[X_1, V(G^{y_s})]_\mathcal{G}| \geq |X_1|(\delta(G) + 1)\). Analogously, $|[X_t,V(G^{y_{s + t+1}})]_\mathcal{G}|\geq\vert X_t\vert(\delta(G)+1)$. Therefore,	
\begin{align}
|S| &\geq \sum_{i=1}^{t}|[X_{i},\overline{X}_{i}]_{G^{y_{s+i}}}|
          + |[X_{1}, G^{y_{s}}]_{\mathcal{G}}|
          + |[X_{t}, G^{y_{s+t+1}}]_{\mathcal{G}}|
          + \sum_{i=1}^{t-1}|[X_{i},\overline{X}_{i+1}]_{\mathcal{G}}|
          + \sum_{i=1}^{t-1}|[X_{i+1},\overline{X}_{i}]_{\mathcal{G}}|\nonumber \\
    &\geq t\delta(G)
          + (|X_{1}| + |X_{t}|)(\delta(G) + 1)
          + \sum_{i=1}^{t-1}|[X_{i},\overline{X}_{i+1}]_{\mathcal{G}}|
          + \sum_{i=1}^{t-1}|[X_{i+1},\overline{X}_{i}]_{\mathcal{G}}|.
\end{align}

	By Lemma 2.2, we have $\sum_{i=1}^{t-1}|[X_{i},\overline{X}_{i+1}]_{\mathcal{G}}|+\sum_{i=1}^{t-1}|[X_{i+1},\overline{X}_{i}]_{\mathcal{G}}|
\geq2(t-1)\lambda(G) =2(t-1)\delta(G).$
	Thus, by (1), we have
\begin{equation}	
\begin{aligned}
	|S|\geq t \delta(G)+(|X_{1}|+|X_{t}|)(\delta(G)+1)+2(t-1)\delta(G).
	\end{aligned}
\end{equation}

	If $|V(D_1)|=3$ or $|V(D_2)|=3$, then we have $|S|\geq\xi_3(\mathcal{G})$. So we assume $|V(D_1)| \geq 4$ and $|V(D_2)| \geq 4$ in the following proof.

	\noindent{\bf Case 1.} $t=1$.

	When $t = 1$, it follows that $V(D_1)\subseteq V(G^{y_{s + 1}})$, which implies $V(D_1)=X_1$, Since $D_1$ is a connected graph and $|V(D_1)| \geq 4$, we have $G^{y_{s + 1}}[X_1]$ is connected and $|X_1|\geq4$. By (2), we have
	\[
	\begin{aligned}
		|S|&=\delta(G)+8(\delta(G)+1)\\
		&=9\delta(G)+8\\
		&>9\delta(G)+2.
	\end{aligned}
	\]

	\noindent{\bf Case 2.} $t=2$.

	When $t = 2$, we have $V(D_1)=X_{1}\cup X_{2}$. Since $|V(D_1)| \geq 4$, it follows that $|X_{1}|+|X_{2}|\geq4$.

	If $|X_{1}|+|X_{2}|=4$, then
\[
\begin{alignedat}{2}
|[X_{1}, \overline{X}_{2}]_{\mathcal{G}}|+|[X_{2}, \overline{X}_{1}]_{\mathcal{G}}|
    &\geq|X_{1}|(\delta(G)+1-|X_{2}|)+|X_{2}|(\delta(G)+1-|X_{1}|) \\
    &=(\delta(G)+1)(|X_{1}|+|X_{2}|)-2|X_{1}|X_{2}| \\
    &\geq(\delta(G)+1)(|X_{1}|+|X_{2}|)-2(\frac{|X_{1}|+|X_{2}|}{2})^{2} \\
    &=4\delta(G)-4.
\end{alignedat}
\]
	By (1), we have
$|S|\geq2\delta(G) + 4(\delta(G) + 1) + 4\delta(G)-4
	=10\delta(G)\geq9\delta(G)+2.$

	If $|X_{1}|+|X_{2}|\geq5$, by (2), we have $|S|\geq2\delta(G) + 5(\delta(G) + 1) + 2\delta(G)=9\delta(G)+5>9\delta(G)+2.$

	\noindent{\bf Case 3.} $3\leq t\leq n - 1$.

	Since $|X_{1}|+|X_{t}|\geq2$, by (2), we have $|S|\geq3\delta(G) + 2(\delta(G) + 1) + 4\delta(G)
	=9\delta(G)+2.$
$\square$

\begin{theorem}
	Let \( G \) be a maximally edge-connected graph of order $m$ $(\geq 3)$. Then
	\[
	\lambda_3(G \boxtimes K_n) = \min \{
	n^{2}\delta(G),\
	(n-1)(m + 2e(G)),\
	3n\delta(G) + 3n - 9
	\},
	\]
	where $n \geq 4$.
\end{theorem}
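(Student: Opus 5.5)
Write $\mathcal{G}=G\boxtimes K_n$ and $\delta=\delta(G)$. The proof has two parts: an upper bound by exhibiting three cuts, and a matching lower bound by the layer decomposition used in the proof of Theorem 3.1.

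\emph{Upper bound.} We exhibit three 3-restricted edge-cuts.
\begin{itemize}
\item Since $\lambda(G)=\delta$ and $\lambda(K_n)=n-1$, Lemma 2.5 gives $\lambda_3(\mathcal{G})\le \min\{(n+n(n-1))\delta,\ (m+2e(G))(n-1)\}=\min\{n^2\delta,\ (n-1)(m+2e(G))\}$.
\item For the third term, take a vertex $x$ with $d_G(x)=\delta$ and three vertices $(x,y_1),(x,y_2),(x,y_3)$ of the layer $K_n^x$. They induce a triangle. Each has degree $\delta+(n-1)+\delta(n-1)=n\delta+n-1$, so
\[
|\partial(X)|=3(n\delta+n-1)-6=3n\delta+3n-9.
\]
\item The complement has $mn-3\ge 3$ vertices and is connected, since removing three vertices of one $K_n$-layer with $n\ge4$ leaves that layer nonempty. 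So this is a 3-restricted edge-cut.
\end{itemize}

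\emph{Lower bound: setup.} Let $S$ be a minimum 3-restricted edge-cut with components $D_1,D_2$.
\begin{itemize}
\item If every $K_n$-layer $K_n^{x}$ meets both $D_1$ and $D_2$, Lemma 2.6 gives $|S|\ge (m+2e(G))(n-1)$.
\item If every $G$-layer $G^{y}$ meets both, Lemma 2.6 gives $|S|\ge (n+2e(K_n))\delta=n^2\delta$.
\item Otherwise, as in Theorem 3.1, we may assume some $K_n^{x_a}$ and some $G^{y_b}$ lie entirely in $D_2$. Let $p(V(D_1))=\{y_1,\dots,y_t\}$ with $t\le n-1$, and put $X_i=V(G^{y_i})\cap V(D_1)$. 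We may also assume $|V(D_1)|,|V(D_2)|\ge4$; otherwise $|S|\ge\xi_3(\mathcal{G})$, and $\xi_3(\mathcal{G})=3n\delta+3n-9$ by a routine check. That check uses that a connected triple lying across two $G$-layers has larger boundary, because $K_n$ is complete.
\end{itemize}

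\emph{Lower bound: counting.} We count edges of $S$ by type. Every $G$-layer indexed by $y\notin p(V(D_1))$ lies in $D_2$; there are $n-t\ge1$ such layers.
\begin{itemize}
\item Each vertex of $X_i$ has $d_G(x)+1\ge\delta+1$ neighbours in each such layer. This gives at least $(n-t)(\delta+1)|V(D_1)|$ edges.
\item Inside each $G^{y_i}$ there are at least $\delta$ edges of $S$ whenever $X_i\neq V(G^{y_i})$. But $X_i=V(G^{y_i})$ is impossible, because $K_n^{x_a}\subseteq D_2$ meets every $G$-layer.
\item For each pair $i\ne j$ among the $t$ layers, Lemma 2.2 applied to $K_2\odot G$ gives at least $2\delta$ edges. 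There are $\binom{t}{2}$ such pairs.
\end{itemize}
Hence
\[
|S|\ge t\delta+t(t-1)\delta+(n-t)(\delta+1)|V(D_1)| .
\]

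\emph{Main obstacle.} The hard step is minimizing this bound over $1\le t\le n-1$ and $|V(D_1)|\ge\max\{4,t\}$, and showing the minimum is at least $\min\{n^2\delta,\ 3n\delta+3n-9\}$.
\begin{itemize}
\item For $t\le n-2$, the factor $n-t\ge2$ combined with $|V(D_1)|\ge4$ should suffice. For example, $t=1$ gives $\delta+4(n-1)(\delta+1)\ge 3n\delta+3n-9$.
\item For $t=n-1$, the bound is $(n-1)^2\delta+(\delta+1)|V(D_1)|$. This is weak when $|V(D_1)|=n-1$; I must check it beats $3n\delta+3n-9$ for $n\ge4$, using $(n-1)^2\ge 3n-3$ when $n\ge4$.
\item Where the crude bound fails (small $t$ with $|V(D_1)|$ near $4$), I would refine as in Case 2 of Theorem 3.1. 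There I would count the edges from $X_i$ to $\overline{X}_j$ directly: each vertex has at least $\delta+1-|X_j|$ of them.
\end{itemize}
Comparing these against $3n\delta+3n-9$ completes the lower bound.
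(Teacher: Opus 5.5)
Your upper bound and your counting bound $|S|\ge t^2\delta+(n-t)(\delta+1)|V(D_1)|$ match the paper's argument up to its inequality (4). You use the same three edge classes and the same Lemmas 2.2 and 2.6. Your remark that $\overline{X}_i\neq\emptyset$ because $K_n^{x_a}\subseteq D_2$ makes explicit something the paper leaves implicit.

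The routes part at the endgame. The paper handles $t=1,2$ directly from (4). For $3\le t\le n-1$ it splits into Subcases 3.1--3.4 by the number of singleton $X_i$ and recounts the edges $[X_i,\overline{X}_j]$ by hand. Your plan, minimizing (4) using only $|V(D_1)|\ge\max\{4,t\}$, is simpler. It closes every case, so the Case-2-style refinement you anticipate is never needed. Put $T=3n\delta+3n-9$.
\begin{itemize}
\item For $t\le 3$, use $|V(D_1)|\ge 4$. This gives $|S|-T\ge(t^2-4t+n)\delta+n-4t+9$, which equals $(n-3)\delta+n+5$, $(n-4)\delta+n+1$ and $(n-3)\delta+n-3$ for $t=1,2,3$.
\item For $4\le t\le n-1$ (so $n\ge 5$), use $|V(D_1)|\ge t$. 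This gives $|S|\ge tn\delta+t(n-t)$. With $\delta\ge1$, $|S|-T\ge(t-3)n+t(n-t)-3n+9=-t^2+2tn-6n+9$. This is concave in $t$, equal to $2n-7$ at $t=4$ and to $(n-2)(n-4)$ at $t=n-1$.
\end{itemize}
All of these are positive, so $|S|>T$ in this branch.

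You did, however, leave this verification undone, and two of your stated shortcuts are wrong as written.
\begin{itemize}
\item For $t=n-1$, the estimate $(n-1)^2\ge 3n-3$ leaves only $(4n-4)\delta+n-1$. Its excess over $T$ is $(n-4)(\delta-2)$, which is negative for $\delta=1$ and $n\ge5$. Keep the exact form $n(n-1)\delta+(n-1)$ instead; its excess is $(n-4)(n\delta-2)$.
\item Your reason for $\xi_3(\mathcal{G})=3n\delta+3n-9$ is inaccurate. The set $\{(x,y_1),(x,y_2),(x',y_1)\}$ with $xx'\in E(G)$ meets two $G$-layers and is a triangle. It attains the same value when $d_G(x)=d_G(x')=\delta$. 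The correct check is that every vertex has degree at least $n\delta+n-1$ and three vertices span at most three edges.
\end{itemize}
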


\noindent{\bf Proof.} Let $\mathcal{G} = G \boxtimes K_n$.
Since $\lambda(G)=\delta(G), \lambda(K_n) = n - 1$ and $e(K_n) = \frac{n(n - 1)}{2}$, by Lemma 2.5, we have $\lambda_3(\mathcal{G}) \leq \min\{ (n + 2e(K_n))\lambda(G),\ (m + 2e(G))\lambda(K_n) \}=\min \{ n^2\delta(G),\ (n - 1)(m + 2e(G)) \}.$ By Lemma 2.4, $\lambda_3(\mathcal{G}) \leq \xi_3(\mathcal{G})=3n\delta(G) + 3n - 9.$
Therefore, $\lambda_3(\mathcal{G}) \leq \min \{
	n^{2}\delta(G),\
	(n-1)(m + 2e(G)),\
	3n\delta(G) + 3n - 9
	\}.$

Now, it suffices to prove that $\lambda_3(\mathcal{G}) \geq \min \{
n^{2}\delta(G),\
(n-1)(m + 2e(G)),\
3n\delta(G) + 3n - 9
\}.$
Let $S$ be a minimum 3-restricted edge-cut of $\mathcal{G}$. Then $\mathcal{G} - S$ has two components $D_1$ and $D_2$, where $|V(D_1)| \geq 3$ and $|V(D_2)| \geq 3$.

	If each vertex $x_i \in V(G)$ satisfies $K_n^{x_i} \cap D_1 \neq \emptyset $ and $K_n^{x_i} \cap D_2 \neq \emptyset $, or each vertex $y_j \in V(K_n)$ satisfies $G^{y_j} \cap D_1 \neq \emptyset $ and $G^{y_j} \cap D_2 \neq \emptyset $,
then by Lemma 2.6, we have $|S| \geq (m + 2e(G))\lambda(K_n)=(n - 1)(m + 2e(G))$ or $|S| \geq (n + 2e(K_n))\lambda(G)=n^{2}\delta(G); $ that is, $|S| \geq \min \{(n-1)(m + 2e(G)), \ n^{2}\delta(G)\}.$
	
Now we consider that there exists a vertex $x_a \in V(G)$ and a vertex $y_b \in V(K_n)$ such that $K_n^{x_a} \cap D_1 = \emptyset $ and $G^{y_b} \cap D_1 = \emptyset $, or $K_n^{x_a} \cap D_2 = \emptyset $ and $G^{y_b} \cap D_2 = \emptyset $. Without loss of generality, assume that there exists a vertex $x_a \in V(G)$ and a vertex $y_b \in V(K_n)$ such that $K_n^{x_a} \cap D_1 = \emptyset $ and $G^{y_b} \cap D_1 = \emptyset $. By this assumption, $ V(K_n^{x_a}) $ and $ V(G^{y_b}) $ are contained in $ D_2 $. Since any two distinct vertices in $ K_n $ are adjacent, by renaming the vertices of $ V(K_n) $, let $ p(V(D_1)) = \{y_{s+1}, y_{s+2}, \ldots, y_{s+t}\} $. Without loss of generality, assume $ s + t < b $. For $ 1 \leq i \leq t $, let $ X_i = V(G^{y_{s+i}}) \cap V(D_1) $ and $ \overline{X}_i = V(G^{y_{s+i}}) \setminus X_i $. Since $G^{y_{s+i}} \cong G$ and $\lambda(G) = \delta(G)$, it follows that $|[X_i, \overline{X}_i]_{G^{y_{s+i}}}| \geq \lambda(G^{y_{s+i}}) = \lambda(G) = \delta(G).$ Let $ Y = V(K_n) \setminus p(V(D_1)) $. By the definition of the strong product, for any $ y \in Y $, we have $ |[X_i, G^y]_{\mathcal{G}}| \geq |X_i|(\delta(G) + 1) $. Therefore,

\[
	\begin{aligned}
		|S| &\geq \sum_{i=1}^{t}|[X_{i}, \overline{X}_{i}]_{G^{y_{s+i}}}| + \sum_{i=1}^{t} \sum_{y \in Y}|[X_{i}, G^{y}]_{\mathcal{G}}| + \sum_{i=1}^{t} \sum_{\substack{j=1 \\ j \neq i}}^{t}|[X_{i}, \overline{X}_{j}]_{\mathcal{G}}|\\
\end{aligned}
	\]
\begin{equation}
\begin{aligned}
	&\geq t \delta(G) + \sum_{i=1}^{t}|X_{i}|(\delta(G)+1)(n-t) + \sum_{i=1}^{t} \sum_{\substack{j=1 \\ j \neq i}}^{t}|[X_{i}, \overline{X}_{j}]_{\mathcal{G}}|.
\end{aligned}
\end{equation}
	
	By Lemma 2.2, we have $\sum_{i=1}^{t}\sum_{\substack{j=1\\ j\neq i}}^{t}|[X_{i},\overline{X}_{j}]_{\mathcal{G}}|\geq\frac{t(t-1)}{2}2\lambda(G)= t(t-1)\delta(G).$
Therefore, by (3), we have
\begin{equation}
\begin{aligned}
	|S|\geq t \delta(G)+\sum_{i=1}^{t}|X_{i}|(\delta(G)+1)(n-t)+t(t-1)\delta(G).
\end{aligned}
\end{equation}

	If $|V(D_1)|=3$ or $|V(D_2)|=3$, then we have $|S|\geq\xi_3(\mathcal{G})=3n\delta(G)+3n-9$. So, in the following, we only consider the case where $|V(D_1)| \geq 4$ and $|V(D_2)| \geq 4$.

	\noindent{\bf Case 1.} $t=1$.

	When \( t = 1 \), it follows that \( V(D_1) \subseteq V(G^{y_{s+1}}) \), which implies \( V(D_1) = X_1 \).
Since $D_1$ is a connected graph and $|V(D_1)| \geq 4$, we have $G^{y_{s+1}}[X_1]$ is connected and $|X_1| \geq 4$. By (4), we have
	\[
	\begin{aligned}
		|S|&\geq t\delta(G)+|X_1|(\delta(G)+1)(n-1)\\
		&\geq\delta(G)+4(\delta(G)+1)(n-1)\\
		&=3n\delta(G)+3n-9+(n-3)\delta(G)+n+5\\
		&>3n\delta(G)+3n-9.
	\end{aligned}
	\]

	\noindent{\bf Case 2.} $t=2$.

	When $t = 2$, we have $V(D_1) = X_1 \cup X_2$. Since $|V(D_1)| \geq 4$, it follows that $|X_1| + |X_2| \geq 4$. By (4), we have
	\[
	\begin{aligned}
	|S|&\geq t\delta(G) + (|X_1| + |X_2|)(\delta(G) + 1)(n - t) + t(t-1)\delta(G)\\
    &\geq 2\delta(G) +4(\delta(G) + 1)(n - 2) + 2\delta(G)\\
	&=3n\delta(G)+3n-9+(n-4)\delta(G)+n+1\\
	&>3n\delta(G)+3n-9.
	\end{aligned}
	\]

\noindent{\bf Case 3.} $3\leq t\leq n - 1$.

\noindent{\bf Subcase 3.1.} For each $i\in\{1,\ldots,t\}$, $|X_i|\geq 2$.

	Since $|X_i| \geq 2$, it follows that $\sum_{i=1}^{t} |X_{i}| \geq 2t$. Therefore, by (4), we obtain	
\begin{align}
|S| &\geq t\delta(G) + 2t(\delta(G) + 1)(n - t) + t(t - 1)\delta(G) \nonumber \\
    &= t\delta(G) + 2tn\delta(G) - 2t^2\delta(G) + 2tn - 2t^2 + t^2\delta(G) - t\delta(G) \nonumber \\
    &= (-t^2 + 2tn)\delta(G) - 2t^2 + 2tn. \tag{5}
\end{align}

	Let \( f_1(t) = -t^2 + 2tn \) and \( f_2(t) = -2t^2 + 2tn \). Since \( 3 \leq t \leq n - 1 \) and \( n \geq 4 \), we have \( f_1(t) \geq \min\{f_1(3), f_1(n-1)\} = \min\{6n - 9,  n^2-1\}=6n - 9 \) and \( f_2(t) \geq \min\{f_2(3), f_2(n-1)\} = \min\{6n - 18, 2n - 2\} = 2n - 2 \). If $n = 4$, then $3n\delta(G) + 3n - 9 = 12\delta(G) + 3$ and $f_1(3) = 15$ and $f_2(3) = 6$. Thus, by (5), it follows that $|S| \geq 15\delta(G) + 6 > 12\delta(G) + 3.$
If \( n \geq 5 \), by (5), then $|S| \geq (6n-9)\delta(G) + 2n-2>4n\delta(G) + 2n -2=3n\delta(G) + 3n -9+n(\delta(G) -1)+7
>3n\delta(G)+3n-9.$

	\noindent{\bf Subcase 3.2.} There exist at least three integers in $\{1,\ldots,t\}$, denoted as 1, 2, and 3, such that $|X_1|=|X_2|=|X_3|=1$.

	Let $\sum_{i=4}^{t} |X_i| = M$. It follows that $\sum_{i=1}^{t}| X_i| = M + 3\geq t$. Considering that $|V(D_1)| \geq 4$, we have $M \geq 1$. Since $|X_1| = 1$, for any $i\in\{2, ..., t\}$, we have $|[X_{i}, \overline{X}_{1} ]_{\mathcal{G}}| \geq |X_{i}|(\delta(G)+1)-|X_{i}|=|X_{i}|\delta(G)$. Thus, $\sum_{i=2}^{t} | [ X_{i}, \overline{X}_{1} ]_{\mathcal{G}}| \geq \sum_{i=2}^{t} |X_{i}|\delta(G)= (M+2)\delta(G)$.
Analogously, since $|X_2| = |X_3| = 1$, we obtain $\sum_{\substack{i=1\\ i\neq 2}}^{t}| [ X_{i}, \overline{X}_{2}]_{\mathcal{G}}| \geq (M+2)\delta(G)$ and $\sum_{\substack{i=1\\ i\neq 3}}^{t}| [ X_{i}, \overline{X}_{3} ]_{\mathcal{G}}| \geq (M+2)\delta(G)$.
By Lemma 2.2, we obtain
	\[
	\sum_{i=4}^{t}\sum_{\substack{j=4\\ j\neq i}}^{t}|[X_{i},\overline{X}_{j}]_{\mathcal{G}}|\geq\frac{(t-3)(t-4)}{2}2\lambda(G)= (t-3)(t-4)\delta(G).
	\]
	Therefore,
	\begin{align*}
	\sum_{i=1}^{t} \sum_{\substack{j=1 \\ j \neq i}}^{t} |[ X_{i}, \overline{X}_{j}]_{\mathcal{G}}|
	&=\sum_{i=2}^{t} | [ X_{i}, \overline{X}_{1} ]_{\mathcal{G}}| + \sum_{\substack{i=1 \\ i \neq 2}}^{t}| [ X_{i}, \overline{X}_{2} ]_{\mathcal{G}}|+ \sum_{\substack{i=1 \\ i \neq 3}}^{t} | [ X_{i}, \overline{X}_{3} ]_{\mathcal{G}}|\\
	&+\sum_{i=1}^{t}\sum_{\substack{j=4\\ j\neq i}}^{t}|[X_{i},\overline{X}_{j}]_{\mathcal{G}}|\\
	&\geq \sum_{i=2}^{t} | [ X_{i}, \overline{X}_{1} ]_{\mathcal{G}}| + \sum_{\substack{i=1 \\ i \neq 2}}^{t}| [ X_{i}, \overline{X}_{2} ]_{\mathcal{G}}|+ \sum_{\substack{i=1 \\ i \neq 3}}^{t} | [ X_{i}, \overline{X}_{3} ]_{\mathcal{G}}|\\
	&+\sum_{i=4}^{t}\sum_{\substack{j=4\\ j\neq i}}^{t}|[X_{i},\overline{X}_{j}]_{\mathcal{G}}|\\
	&\geq 3(M+2)\delta(G) + (t-3)(t-4)\delta(G).
\end{align*}
	Since $t \leq n-1$, we have $Mn - Mt \geq M$. Thus, by (3), we have
	\begin{align}
	|S|&\geq t \delta(G) + (M+3)(\delta(G)+1)(n-t) + 3(M+2)\delta(G)+(t-3)(t-4)\delta(G)	\nonumber \\
	&=(t^2-9t+18+Mn-Mt+3M+3n)\delta(G)+Mn-Mt+3n-3t
	\nonumber \\
	&\geq(t^2-9t+18+4M+3n)\delta(G)+M+3n-3t.  \tag{6}
\end{align}

	Let $f_3(t) = t^2 - 9t + 18$. Since $3 \leq t \leq n - 1$ and $n \geq 4$, we have $f_3(t) \geq \min\{f_3(3), f_3(n-1)\} = \min\{0, n^2 - 11n + 28\}$. Let $f_4(t) = n^2 - 11n + 28$. Since $f_4(t) \geq f_4(5) = -2$, it follows that $f_3(t) \geq \min\{0, -2\} = -2$. By $M+3 \geq t$ and $\delta(G) \geq 1$, we obtain $3M \geq 3t - 9$ and $2M\delta(G) \geq 2M$. Since $M \geq 1$, we have $4M - 2 \geq 2M$.
Therefore, by (6),
$|S|\geq(-2+4M+3n)\delta(G)+M+3n-3t\geq (2M+3n)\delta(G)+M+3n-3t\geq 2M+3n\delta(G)+M+3n-3t=3n\delta(G)+3n-9+3M-3t+9\geq3n\delta(G)+3n-9+3t-9-3t+9=3n\delta(G)+3n-9. $

	\noindent{\bf Subcase 3.3.} There exist only two integers in $\{1,\ldots,t\}$, denoted as 1 and 2, such that $|X_1|=|X_2|=1$.

	For each \( i \in \{3, \ldots, t\} \), we have \( |X_i| \geq 2 \). Thus, it follows that $\sum_{i=1}^{t} |X_i| \geq 2t - 2.$
If $\sum_{i=1}^{t} |X_i| \geq 2t $, then we have $|S| \geq 3n\delta(G) + 3n - 9$ by a similar argument as Subcase 3.1.
So we consider the following two subcases.

	\noindent{\bf Subcase 3.3.1.} $\sum_{i=1}^{t}|X_{i}|= 2t-1$.

Since $|X_1|=|X_2|=1$ and $|X_i| \geq 2$ for  $i \in \{3, \ldots, t\}$, there must exist an integer in $\{3, \ldots, t\}$, denoted as $3$, such that $|X_3|=3$, and for each $i \in \{4, \ldots, t\}$, $|X_i|=2$.

Since $|X_1|=|X_2|=1$, we have $\sum_{i=2}^{t} |[X_{i}, \overline{X}_{1}]_{\mathcal{G}}| \geq \sum_{i=2}^{t} |X_{i}|\delta(G)= (2t-2)\delta(G)$ and $\sum_{\substack{i=1\\ i\neq 2}}^{t}| [ X_{i}, \overline{X}_{2}]_{\mathcal{G}}| \geq \sum_{\substack{i=1\\ i\neq 2}}^{t}|X_{i}|\delta(G)= (2t-2)\delta(G)$.
	By Lemma 2.2, we obtain
	\[
	\sum_{i=3}^{t}\sum_{\substack{j=3\\ j\neq i}}^{t}|[X_{i},\overline{X}_{j}]_{\mathcal{G}}|\geq\frac{(t-2)(t-3)}{2}2\lambda(G)= (t-2)(t-3)\delta(G).
	\]
	Therefore,
	\begin{align*}
	\sum_{i=1}^{t} \sum_{\substack{j=1 \\ j \neq i}}^{t} | [ X_i, \overline{X}_j ]_{\mathcal{G}}|
	&=\sum_{i=2}^{t} |[X_{i}, \overline{X}_{1}]_{\mathcal{G}}|+ \sum_{\substack{i=1\\ i\neq 2}}^{t}
	|[ X_{i}, \overline{X}_{2}]_{\mathcal{G}}|+ \sum_{i=1}^{t} \sum_{\substack{j=3 \\ j \neq i}}^{t} | [ X_i, \overline{X}_j ]_{\mathcal{G}} | \\
	& \geq \sum_{i=2}^{t} |[X_{i}, \overline{X}_{1}]_{\mathcal{G}}|+ \sum_{\substack{i=1\\ i\neq 2}}^{t}
	|[ X_{i}, \overline{X}_{2}]_{\mathcal{G}}|+\sum_{i=3}^{t} \sum_{\substack{j=3 \\ j \neq i}}^{t} | [ X_i, \overline{X}_j ]_{\mathcal{G}} | \\
	& \geq 4(t-1) \delta(G) + (t-2)(t-3) \delta(G).
\end{align*}
	
By (3), we have
	\begin{align}
	|S|&\geq t \delta(G) + (2t-1)(\delta(G)+1)(n-t) + 4(t-1)\delta(G)+(t-2)(t-3)\delta(G)\nonumber \\
	&\geq(-t^2+t+2tn-n+2)\delta(G)+2tn-2t^2+t-n.\tag{7}
	\end{align}

	Let $f_5(t)= -t^2+t+2tn-n+2$ and $f_6(t)=2tn-2t^2+t-n$. Since $3\leq t\leq n - 1$ and $n\geq 4$, we have $f_5(t)\geq\min\{f_5(3),f_5(n - 1)\}=\min\{5n - 4,n^2\}=5n-4$ and $f_6(t)\geq\min\{f_6(3),f_6(n - 1)\}=\min\{5n - 15,2n-3\}=2n-3$. Therefore, by (7), $|S|\geq (5n-4)\delta(G)+2n-3\geq 4n\delta(G)+2n-3=3n\delta(G)+3n-9+n(\delta(G)-1)+12> 3n\delta(G)+3n-9. $

	\noindent{\bf Subcase 3.3.2.} $\sum_{i=1}^{t} |X_{i}| = 2t - 2$.

Since $|X_1|=|X_2|=1$ and $|X_i| \geq 2$ for $i \in \{3, \ldots, t\}$, it follows that $|X_i| = 2$ for $i \in \{3, \ldots, t\}$.
By $|X_1|=|X_2|=1$, we have \(\sum_{\substack{i = 1\\i\neq j}}^{t}|X_{i},\overline{X}_{j}]_{\mathcal{G}}| \geq \sum_{\substack{i = 1\\i\neq j}}^{t}|X_{i}|\delta(G)=(2t-3)\delta(G)\) for $j=1$ and $j=2$. By $|X_3|=...=|X_t|=2$, we have \(\sum_{\substack{i = 1\\i\neq j}}^{t}|X_{i},\overline{X}_{j}]_{\mathcal{G}}| \geq \sum_{\substack{i = 1\\i\neq j}}^{t}|X_{i}|(\delta(G)-1)=(2t-4)(\delta(G)-1)\) for each $j \in \{3, \ldots, t\}$. Therefore,
	\[
\begin{aligned}
	\sum_{i = 1}^{t}\sum_{\substack{j = 1\\j\neq i}}^{t}|[X_{i},\overline{X}_{j}]_{\mathcal{G}}|& = \sum_{i = 2}^{t}|[X_{i},\overline{X}_{1}]_{\mathcal{G}}|+\sum_{\substack{i = 1\\i\neq 2}}^{t}|[X_{i},\overline{X}_{2}]_{\mathcal{G}}| + \sum_{j=3}^{t}\sum_{\substack{i = 1\\i\neq j}}^{t}|[X_{i},\overline{X}_{j}]_{\mathcal{G}}| \\
	&\geq (2t - 3)\delta(G)+(2t - 3)\delta(G) + (2t - 4)(\delta(G) - 1)(t - 2)\\
	&= 2(2t - 3)\delta(G) + 2{(t - 2)^2}(\delta(G) - 1).
\end{aligned}
\]
	By (3), we have
	\begin{align}
	|S|&\geq t \delta(G) + (2t-2)(\delta(G)+1)(n-t) + 2(2t - 3)\delta(G) + 2(\delta(G) - 1){(t - 2)^2}.\tag{8}
	\end{align}

If $\delta(G)=1$, then $3n\delta(G)+3n-9=6n-9$. By (8), we have $|S|\geq-4t^2+9t+4tn-4n-6$. Let $f_7(t)=
-4t^2+9t+4tn-4n-6$. Since $3\leq t\leq n - 1$ and $n\geq 4$, $f_7(t)\geq\min\{f_7(3),f_7(n - 1)\}=\min\{8n - 15,9n-19\}=8n-15>6n-9$. Therefore, $|S|>6n-9$.

If $\delta(G)\geq2$, by (8), then $|S|\geq t \delta(G) + (2t-2)(\delta(G)+1)(n-t) + 2(2t - 3)\delta(G) + 2(t - 2)^2=(-2t^2+2tn+7t-2n-6)\delta(G)+2tn-6t-2n+8$. Let $f_8(t)=-2t^2+2tn+7t-2n-6$ and $f_9(t)=2tn-6t-2n+8$. Since $3\leq t\leq n - 1$ and $n\geq 4$, we have $f_8(t)\geq\min\{f_8(3), f_8(n - 1)\}=\min\{4n-3, 7n-15\}=4n-3$ and $f_9(t)\geq f_9(3)=4n-10$. Therefore, $|S|\geq(4n-3)\delta(G)+4n-10>3n\delta(G)+3n-9+n-1>3n\delta(G)+3n-9$.

\noindent{\bf Subcase 3.4.} There exists only one integer in $\{1,\ldots,t\}$, denoted as 1, such that $|X_1|=1$.

If there exists some $i \in \{2, \ldots, t\}$ such that $|X_i|\geq3$, then $\sum_{i=1}^{t} |X_{i}| \geq 2t$. By a similar argument as Subcase 3.1, we obtain $|S|> 3n\delta(G)+3n-9$. Thus, assume that $|X_i|=2$ for each $i\in\{2, \ldots, t\}$. By $|X_1|=1$, we have $\sum_{i=2}^{t} | [ X_{i},  \overline{X}_{1} ]_{\mathcal{G}} |\geq \sum_{i=2}^{t}  |X_{i}|\delta(G)=(2t-2)\delta(G)$. By $|X_2|=...=|X_t|=2$, we have \(\sum_{\substack{i = 1\\i\neq j}}^{t}|X_{i},\overline{X}_{j}]_{\mathcal{G}}| \geq \sum_{\substack{i = 1\\i\neq j}}^{t}|X_{i}|(\delta(G)-1)=(2t-3)(\delta(G)-1)\) for each $j \in \{2, \ldots, t\}$. Therefore,
\begin{align*}
	\sum_{i=1}^{t} \sum_{\substack{j=1 \\ j \neq i}}^{t} | [ X_{i},  \overline{X}_{j} ]_{\mathcal{G}} |
	&= \sum_{i=2}^{t} | [ X_{i},  \overline{X}_{1} ]_{\mathcal{G}} | + \sum_{j=2}^{t} \sum_{\substack{i=1 \\ i \neq j}}^{t} | [ X_{i},  \overline{X}_{j} ]_{\mathcal{G}} | \\
	&\geq (2t-2)\delta(G) + (2t-3)(\delta(G)-1)(t-1).
\end{align*}
By (3), we have
	\begin{align}
	|S|&\geq t \delta(G) + (2t-1)(\delta(G)+1)(n-t) + (2t - 2)\delta(G) +  (2t-3)(\delta(G)-1)(t-1).\tag{9}
	\end{align}
	
If $\delta(G)=1$, then $3n\delta(G)+3n-9=6n-9$. By (9), we have $|S|\geq-4t^2+4tn+5t-2n-2$. Let $f_{10}(t)=
-4t^2+4tn+5t-2n-2$. Since $3\leq t\leq n - 1$ and $n\geq 4$, $f_{10}(t)\geq\min\{f_{10}(3),f_{10}(n - 1)\}=\min\{10n - 23,7n-11\}=7n-11>6n-9$. Therefore, $|S|>6n-9$.

	If $\delta(G)\geq2$, by (9), then $|S|\geq t \delta(G) + (2t-1)(\delta(G)+1)(n-t) + (2t - 2)\delta(G) +(2t-3)(t-1)=(-2t^2+2tn+4t-n-2)\delta(G) + 2tn-4t-n+3$. Let $f_{11}(t)=-2t^2+2tn+4t-n-2$ and $f_{12}(t)=2tn-4t-n+3$. Since $3\leq t\leq n - 1$ and $n\geq 4$, we have $f_{11}(t)\geq\min\{f_{11}(3),f_{11}(n - 1)\}=5n-8$ and $f_{12}(t)\geq f_{12}(3)=5n-9$. Therefore, $|S|\geq(5n-8)\delta(G)+5n-9\geq3n\delta(G)+3n-9+2n>3n\delta(G)+3n-9$.
$\square$

Since $\xi_3(G \boxtimes K_n)=3n\delta(G) + 3n - 9$, by Theorem 3.2, we obtain the following corollary.

\begin{corollary}
Let \( G \) be a maximally edge-connected graph of order $m \geq 3$. If $\min\{ n^{2}\delta(G), (n-1)(m +2e(G))\}
\geq 3n\delta(G) + 3n - 9$, then \( G \boxtimes K_n \) is maximally 3-restricted edge-connected, where \( n \geq 4 \).
\end{corollary}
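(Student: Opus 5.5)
The corollary should follow almost immediately from Theorem 3.2 together with the value of $\xi_3(G\boxtimes K_n)$. The plan is to first record that $\xi_3(G\boxtimes K_n)=3n\delta(G)+3n-9$. This was asserted in the proof of Theorem 3.2; I would re-verify it by computing $|\partial(X)|$ for a connected triple $X$ in $\mathcal{G}=G\boxtimes K_n$. The degree of $(x,y)$ is $d_G(x)+(n-1)+d_G(x)(n-1)=n d_G(x)+n-1$. To minimize, one wants all three vertices to have degree $n\delta(G)+n-1$ and the triple to span as many edges as possible.

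Taking $X=\{(x,y_1),(x,y_2),(x,y_3)\}$ inside one $K_n$-layer $K_n^x$ with $d_G(x)=\delta(G)$ gives a triangle. Hence
\[
|\partial(X)|=3(n\delta(G)+n-1)-6=3n\delta(G)+3n-9 .
\]
No connected triple spans more than three edges, and each vertex has degree at least $n\delta(G)+n-1$, so this value is the minimum. Thus $\xi_3(\mathcal{G})=3n\delta(G)+3n-9$.

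Next I would apply Theorem 3.2, which gives
\[
\lambda_3(\mathcal{G})=\min\{n^2\delta(G),(n-1)(m+2e(G)),3n\delta(G)+3n-9\}.
\]
The hypothesis $\min\{n^2\delta(G),(n-1)(m+2e(G))\}\ge 3n\delta(G)+3n-9$ means the minimum is attained by the third term. Therefore $\lambda_3(\mathcal{G})=3n\delta(G)+3n-9=\xi_3(\mathcal{G})$, which is precisely the definition of being maximally 3-restricted edge-connected.

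The only point requiring any care is the evaluation of $\xi_3$, namely checking that no connected triple does better than a triangle in a minimum-degree $K_n$-layer. This is just the degree-sum bound $|\partial(X)|\ge \sum_{v\in X} d(v)-2e(G[X])$ with $e(G[X])\le 3$.
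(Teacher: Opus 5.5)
Your proposal is correct and follows the paper's argument exactly: the paper also derives the corollary by combining Theorem 3.2 with the value $\xi_3(G\boxtimes K_n)=3n\delta(G)+3n-9$, so that the hypothesis forces the minimum in Theorem 3.2 to be attained by the third term. Your explicit verification of $\xi_3$ via $|\partial(X)|=\sum_{v\in X}d_{\mathcal{G}}(v)-2e(\mathcal{G}[X])$ and $d_{\mathcal{G}}((x,y))=n\,d_G(x)+n-1$ fills in a computation the paper only asserts; note it should read $e(\mathcal{G}[X])$, not $e(G[X])$.
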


\section{Concluding Remarks}
In this paper, we determine the 3-restricted edge-connectivity for the strong product graphs of a maximally edge-connected graph with the cycle and the complete graph. Furthermore, we establish sufficient conditions under which these strong product graphs are maximally 3-restricted edge-connected. Future research may focus on the 3-restricted edge-connectivity for the strong product graphs of a connected general graph with the cycle and the complete graph.



\end{sloppypar}

\begin{thebibliography}{5}

\bibitem{Bai} M. Bai, Y. Tian, J. Yin, The super restricted edge-connectedness of direct product graphs, Parallel Processing Letters 33(3) (2023) 2350008.

\bibitem{Bondy} J. A. Bondy, U. S. R. Murty, Graph Theory, Springer, New York, 2008.

\bibitem{Bresar1} B. Bre{\v{s}}ar, S. {\v{S}}pacapan, Edge-connectivity of Strong products of graphs, Discussiones Mathematicae Graph Theory 27(2) (2007) 333-343.

\bibitem{Bresar2} B. Bre{\v{s}}ar, S. {\v{S}}pacapan, On the connectivity of the direct product of graphs, Australasian Journal of Combinatorics 41 (2008) 45-56.

\bibitem{Cao} X. L. Cao, {\v{S}}. Brglez, S. {\v{S}}pacapan, E. Vumar, On edge connectivity of direct products of graphs, Information Processing Letters 111(18) (2011) 899-902.

\bibitem{Esfahanian} A. H. Esfahanian, S. L. Hakimi, On computing a conditional edge-connectivity of a graph, Information Processing Letters 27(4) (1988) 195-199.

\bibitem{Guo} S. Guo, X. Hu, W. Yang, S. Zhao, The super edge-connectivity of direct product of a graph and a cycle, The Journal of Supercomputing 80(16) (2024) 23367-23383.

\bibitem{Harary} F. Harary, Conditional connectivity, Networks 13(3) (1983) 347-357.

\bibitem{Klavzar} S. Klav{\v{z}}ar, S. {\v{S}}pacapan, On the edge-connectivity of Cartesian product graphs, Asian-European Journal of Mathematics 1(1) (2008) 93-98.

\bibitem{Ma} T. Ma, J. Wang, M. Zhang, The restricted edge-connectivity of Kronecker product graphs, Parallel Processing Letters 29(3) (2019) 1950012.

\bibitem{Ou1} J. Ou, On optimizing edge connectivity of product graphs, Discrete Mathematics 311(6) (2011) 478-492.

\bibitem{Ou2} J. Ou, W. Zhao, On restricted edge connectivity of strong product graphs, Ars Combinatoria 123 (2015) 55-64.

\bibitem{Spacapan} S. {\v{S}}pacapan, A characterization of the edge connectivity of direct products of graphs, Discrete Mathematics 313(12) (2013) 1385-1393.

\bibitem{Wang1} Y. Wang, Q. Li, Upper bound of the third edge-connectivity of graphs, Science in China Series A: Mathematics 48(3) (2005) 360-371.

\bibitem{Wang2} J. Wang, J. Ou, T. Zhu, On restricted edge connectivity of regular Cartesian product graphs, Australasian Journal of Combinatorics 48 (2010) 111-116.

\bibitem{Wang3} Z. Wang, Y. Mao, C. Ye, H. Zhao, Super edge-connectivity of Strong product graphs, Journal of Interconnection Networks 17(2) (2017) 1750007.

\bibitem{Yang} C. Yang, J. M. Xu, Connectivity and edge-connectivity of Strong product graphs, University of Science and Technology of China 38(5) (2008) 449-455.

\bibitem{Ye} H. Ye, Y. Tian, The Restricted Edge-Connectivity of Strong Product Graphs, Axioms 13(4) (2024) 231.

\end{thebibliography}
\end{document}